\newtheorem{theorem}{Theorem}
\newtheorem{proposition}[theorem]{Proposition}
\newtheorem{rem}[theorem]{Remark}
\newtheorem{ex}[theorem]{Example}
\newtheorem{lemma}[theorem]{Lemma}
\newcommand{\Diff}{\mathscr{D}}
\newcommand{\Diffmu}{\Diff_{\mu}}
\newcommand{\llangle}{\langle\!\langle}
\newcommand{\rrangle}{\rangle\!\rangle}
\newcommand{\transpose}{\text{T}}
\newcommand{\grad}{\nabla}
\newcommand{\Laplacian}{\Delta}
\DeclareMathOperator{\diver}{div}
\title{The exponential map of the group of area-preserving diffeomorphisms of a surface with boundary}
\author{James Benn, Gerard Misio{\l}ek and Stephen C. Preston}
\date{\today}
\begin{document}

\maketitle
\numberwithin{equation}{section}

\begin{abstract}
We prove that the Riemannian exponential map of the right-invariant $L^2$ metric on the group of
volume-preserving diffeomorphisms of a two-dimensional manifold with a nonempty boundary
is a nonlinear Fredholm map of index zero.
\end{abstract}

\section{Introduction}
\label{sec:Intro}

\hskip 0.5cm
Consider a compact $n$-dimensional manifold $M$ with a smooth boundary $\partial M$
equipped with a Riemannian metric.
Let $\mathscr{D}_{\mu}^{s}$ be the volumorphism group; that is, the group of diffeomorphisms of $M$ which preserve
the Riemannian volume form $\mu$ and are of Sobolev class $H^s$.
It is well-known that if $s> n/2+1$, then $\mathscr{D}_{\mu}^{s}$ is a submanifold of
the infinite dimensional Hilbert manifold $\mathscr{D}^s$ of all $H^s$ diffeomorphisms of $M$.
Its tangent space $T_\eta\mathscr{D}^s_\mu$ consists of $H^{s}$ sections $X$ of
the pull-back bundle $\eta^{*}TM$ whose right-translations $X{\circ}\eta^{-1}$ to the identity element
are the divergence-free vector fields on $M$ that are parallel to the boundary $\partial M$. The $L^{2}$ inner product for vector fields
\begin{equation}
\langle u, v \rangle _{L^{2}}
=
\int_{M} \langle u(x), v(x) \rangle \, d\mu(x)
\qquad\quad
u,v\in T_{e}\mathscr{D}_{\mu}^{s}
\label{eq:L2}
\end{equation}
defines a right-invariant metric on $\mathscr{D}^s$ and hence also on $\mathscr{D}_\mu^s$
with associated Levi-Civita connections.
The curvature tensor $\mathcal{R}$ of this metric on $\mathscr{D}_\mu^s$ is a bounded trilinear operator
on each tangent space and is invariant with respect to right translations by $\mathscr{D}_{\mu}^{s}$.
Our main references for the basic facts about $\mathscr{D}^s_\mu$ and its $L^2$ geometry
are the papers \cite{Ebin70}, \cite{Misiolek92}, \cite{MP10} and the monograph \cite{Arnold98}.

\hskip 0.5cm
Arnold, in his pioneering paper \cite{Arnold66}, reinterpreted the hydrodynamics of
an ideal fluid filling $M$ in terms of the Riemannian geometry of the volumorphism group of $M$
equipped with the $L^{2}$ metric describing the fluid's kinetic energy.
He showed that a curve $\eta(t)$ is a geodesic of the $L^{2}$ metric on $\mathscr{D}^s_\mu$
starting from the identity element $e$ in the direction $v_{0}$ if and only if
the time dependent vector field $v=\dot{\eta}\circ\eta^{-1}$ on $M$ solves
the incompressible Euler equations
\begin{align} \label{eq:euler}
&\partial_{t} v + \nabla_{v}v = - \mathrm{grad}\, p
\nonumber \\
&\mathrm{div}\, v=0
\\  \nonumber
&\langle v, \nu \rangle = 0 \;\, \mathrm{on} \;\, \partial M
\end{align}
with the initial condition
\begin{align} \label{eq:eulerIC}
v(0) = v_{0}
\end{align}
where $p$ is the pressure function, $\nabla$ denotes the covariant derivative on $M$
and $\nu$ is the outward pointing normal to the boundary $\partial M$.

\hskip 0.5cm
It turns out that there is a technical advantage in rewriting the Euler equations this way;
Ebin and Marsden \cite{Ebin70} showed that the Cauchy problem for the corresponding geodesic equation
in $\mathscr{D}^s_\mu$ can be solved uniquely on short time intervals by a standard Banach-Picard
iteration argument.
In particular, its solutions depend smoothly on the data, and as a result one can define
(at least for small $t$) a smooth exponential map
$$
\exp_{e}:T_{e}\mathscr{D}_{\mu}^{s} \rightarrow \mathscr{D}_{\mu}^{s},
\qquad
\exp_e tv_0 = \eta(t),
$$
where $\eta(t)$ is the unique geodesic of \eqref{eq:L2} issuing from the identity with initial velocity
$v_0 \in T_{e}\mathscr{D}_{\mu}^{s}$.
The exponential map is a local diffeomorphism from an open set around zero in
 $T_{e}\mathscr{D}_{\mu}^{s}$ onto a neighborhood of
the identity in $\mathscr{D}_{\mu}^{s}$.
This follows from the inverse function theorem and the fact that the derivative of $\exp_e$ at time $t=0$ is the identity map.
Furthermore, if $n=2$ then by the classical result of Wolibner \cite{Wolibner33}
the exponential map can be extended to the whole tangent space $T_e\mathscr{D}_\mu^s$, which is interpreted
as geodesic completeness of the volumorphism group with respect to the $L^2$ metric.
These results continue to hold in the case when the underlying Riemannian manifold $M$ is noncompact
with bounded geometry and compact boundary,
provided that we restrict to diffeomorphisms that differ from the identity only on a compact subset of $M$
or, more generally, to those that fall off rapidly to the identity at each end of $M$,
cf. e.g., \cite{Cantor}, \cite{EichSchm}, or \cite{Michor}.

\hskip 0.5cm
The structure and distribution of singularities of the exponential map of \eqref{eq:L2}
has been of considerable interest ever since the problem of conjugate points in $\mathscr{D}_\mu^s$
was raised by Arnold in \cite{Arnold66}.
The first examples of conjugate points were constructed in \cite{Misiolek92} and \cite{Misiolek96}
in the case when $M$ is a sphere with the round metric or the flat 2-torus.
Further examples can be found in \cite{Shnirelman94}, \cite{Preston06}, \cite{Preston08}, \cite{Benn1}
and \cite{Benn2}.
In \cite{EMP06} it was proved that the $L^2$ exponential map is a non-linear Fredholm map of index zero
whenever $M$ is a compact manifold of dimension $2$ without boundary and moreover that
the Fredholm property fails for a steady rotation of the solid torus in $\mathbb{R}^3$.
More pathological counterexamples were constructed in \cite{Preston06} using curl eigenfields
on the sphere $\mathbb{S}^3$ and more recently in \cite{PrWash} in the case of certain axisymmetric flows
in $\mathbb{R}^3$.
Furthermore, Shnirelman \cite{Shnirelman05} proved that when $M$ is the flat 2-torus the exponential map on
$\mathscr{D}_\mu^s$ is a Fredholm quasiruled map. In \cite{MP10} the authors
showed that the failure of the Fredholm property in the case of three-dimensional manifolds is ``borderline,''
in the sense that the exponential maps of Sobolev $H^r$ metrics are necessarily Fredholm whenever $r>0$.

\hskip 0.5cm
An outstanding problem left unresolved in these papers concerns the case when
a two-dimensional manifold $M$ has a nonempty boundary $\partial M$.
The methods employed in \cite{EMP06} 
allowed only for a much weaker result,
namely, that the derivative of the exponential map along a geodesic in $\mathscr{D}^s_\mu$ can be extended to
a linear Fredholm operator defined on the $L^2$ completions of the tangent spaces to the volumorphism group.
The question of whether the behavior is genuinely different in case of a boundary has been raised in light of
recent work  where phenomena have been discovered that seem to rely heavily on the
presence of the boundary (such as double-exponential growth of the vorticity field in 2D~\cite{KisSve}
and numerically-observed blowup in 3D~\cite{LuoHou}).

\hskip 0.5cm
The main goal of the present paper is to establish the strong $H^{s}$ Fredholmness property of the exponential map
for incompressible 2D fluids in the presence of boundaries.
For notational simplicity and clarity of exposition we will consider the simplest case of
periodic flows in the upper half-plane and work in a single chart.
The general case of bounded domains in $\mathbb{R}^2$ can be treated
in the standard way by choosing a suitable open cover of the boundary $\partial M$
together with a subordinate smooth partition of unity and applying the result for the half-plane.
Our main result in this paper is thus the following
\begin{theorem} \label{thm:Fred-b}
Let $M=S^1 \times [0,\infty)$ be the periodic upper half-plane with boundary $\partial M = S^1$ and
assume $s>2$. The exponential map of the $L^2$ metric \eqref{eq:L2} on $\mathscr{D}_\mu^s(M)$
is a nonlinear Fredholm map of index zero.
\end{theorem}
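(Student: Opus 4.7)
The plan is to adapt the strategy of \cite{EMP06}, which proves the analogous statement on closed surfaces, to the half-plane setting; the essential new ingredient is a Biot--Savart operator with Dirichlet boundary condition. The derivative $d\exp_{tv_0}(w_0)$ is the value at time $t$ of the Jacobi field $J$ along the geodesic $\eta(s) = \exp_e(sv_0)$ with $J(0) = 0$ and $\nabla_t J(0) = w_0$, so Fredholmness of $d\exp_{tv_0}$ is equivalent to Fredholmness of the time-$t$ solution map of the Jacobi equation. I would first right-translate to the identity via $y(t) = J(t) \circ \eta(t)^{-1}$, which converts the Jacobi equation into a second-order linear ODE on $T_e\mathscr{D}_\mu^s$ whose coefficients depend on the Eulerian velocity $v(t)=\dot\eta(t)\circ\eta(t)^{-1}$ and on the curvature of the $L^2$ metric.

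I would then recast this as a first-order system $\dot Y + \mathcal{A}(t) Y = \mathcal{C}(t) Y$ on $T_e\mathscr{D}_\mu^s \oplus T_e\mathscr{D}_\mu^s$, splitting the Jacobi operator so that $\mathcal{A}(t)$ is bounded and possesses an invertible fundamental solution $\Psi(t)$ (for example, $\mathcal{A}$ encoding transport by $v$ together with a skew-adjoint principal part whose flow is pullback by $\eta(t)$ followed by a bounded correction), while $\mathcal{C}(t)$ collects the remaining curvature contributions. The crucial analytic input is that $\mathcal{C}(t)$ is a compact operator on $T_e\mathscr{D}_\mu^s$. In two dimensions this reduces to the Biot--Savart identity $u = \nabla^\perp \Delta_D^{-1}\omega$ recovering a divergence-free, boundary-tangent field from its scalar vorticity $\omega=\text{curl}\, u$, where $\Delta_D^{-1}$ is the Dirichlet inverse Laplacian on $M$. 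Since Biot--Savart gains one derivative, the resulting map $H^{s-1}(M) \to H^s(M) \hookrightarrow H^{s-1}(M)$ should be compact, and this compactness propagates through the curvature terms that make up $\mathcal{C}(t)$.

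Variation of parameters then represents the pulled-back derivative $\Phi(t):=dR_{\eta(t)^{-1}}\circ d\exp_{tv_0}$ as
\begin{equation*}
\Phi(t) \;=\; \Psi(t)\bigl(I + \mathcal{K}(t)\bigr), \qquad \mathcal{K}(t) := \int_0^t \Psi(s)^{-1}\mathcal{C}(s)\Psi(s)\, ds,
\end{equation*}
so that $\mathcal{K}(t)$ is a norm-convergent integral of compact operators and hence compact. This writes $d\exp_{tv_0}$ as an isomorphism composed with the identity plus a compact operator, which yields the Fredholm property with index zero.

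The principal obstacle is the noncompactness of the end of $M = S^1 \times [0,\infty)$: the Dirichlet Laplacian on $M$ has continuous spectrum and Rellich compactness fails on the full Sobolev space $H^{s-1}(M)$, so the Biot--Savart map is not \emph{a priori} compact. To circumvent this one should work within the class of diffeomorphisms that are asymptotic to the identity at infinity, namely in weighted Sobolev spaces in the spirit of \cite{Cantor}, \cite{EichSchm}, \cite{Michor}; a decay weight suppresses the tail, and compactness of $\Delta_D^{-1}\circ\nabla^\perp$ is recovered by combining interior Rellich with asymptotic estimates at infinity. One must then verify that the projection $\Proj$ onto divergence-free boundary-tangent fields, the flow $\Psi(t)$, and the operator $\mathcal{K}(t)$ all preserve this weighted class, and that the stream function associated via the Dirichlet problem on the periodic half-plane to a decaying vorticity is well-defined and inherits the required decay. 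These verifications form the bulk of the technical work, after which the abstract Fredholm argument from \cite{EMP06} closes the proof.
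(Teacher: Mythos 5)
Your overall architecture --- represent $d\exp_e(tv_0)$ via the Jacobi solution operator, split it as an invertible factor composed with identity-plus-compact, and use the Dirichlet stream-function formulation $u = \nabla^\perp\Delta_D^{-1}\omega$ --- matches the paper's starting point (Proposition \ref{prop:OLD}, taken from \cite{EMP06}, gives exactly such a decomposition $\Phi_t = D\eta(t)(\Omega_t - \Gamma_t)$). But you have misidentified where the difficulty lies, and the step you treat as routine is the entire content of the theorem. Your variation-of-parameters formula $\Phi(t)=\Psi(t)(I+\mathcal{K}(t))$ presupposes a splitting in which the non-compact factor $\Psi(t)$ is an isomorphism of $T_e\mathscr{D}_\mu^s$. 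No such splitting is available a priori: the curvature operator of the $L^2$ metric is bounded but \emph{not} compact on $H^s$ (if it were, Fredholmness would follow in every dimension, contradicting the known 3D counterexamples), so the invertible part must absorb a genuinely nontrivial piece of the curvature. In the EMP06 decomposition that piece is $\Omega_t=\int_0^t \mathrm{Ad}_{\eta(\tau)^{-1}}\mathrm{Ad}_{\eta(\tau)^{-1}}^{*}\,d\tau$, equivalently the operator $\widehat\Omega_t=\int_0^t\Lambda_\tau^{-1}d\tau$ on stream functions with $\Lambda_t=\Delta_0^{-1}\circ R_{\eta(t)}\circ\Delta\circ R_{\eta(t)^{-1}}$, and proving that \emph{this} operator is invertible in the $H^{s+1}$ topology is precisely what fails by the closed-manifold argument when $\partial M\neq\emptyset$: the $H^s$ coercivity estimate is obtained by repeated integration by parts, and the boundary $y=0$ contributes terms such as $\int_{\partial M} f_{m+1,n}\,\partial_x^m\partial_y^{n-1}(\partial_x g - |\partial_y\eta|^2\partial_x f+\langle\partial_x\eta,\partial_y\eta\rangle\partial_y f)\,dx$ that are not controlled by the standard Sobolev norm. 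The paper's new idea is to replace the $H^{s+1}$ inner product by a weighted one, $\llangle f,g\rrangle_{s+1}=\sum_j B_j\langle\nabla\partial_x^j\partial_y^{s-j}f,\nabla\partial_x^j\partial_y^{s-j}g\rangle_{L^2}$, with the coefficients $B_k$ chosen by a recursion so that each boundary term is dominated by interior terms carrying a larger weight; coercivity up to lower order then gives closed range, and a semi-Fredholm index-continuity argument gives invertibility. Nothing in your proposal addresses this.

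Your diagnosis of the ``principal obstacle'' as the noncompact end $y\to\infty$ is also off target. The loss of Rellich compactness at infinity is handled simply by building rapid decay into the function class ($\mathcal{F}^{s+1}$ consists of stream functions vanishing rapidly as $y\to\infty$), and in any case the compactness that is actually needed is that of $K_v w=\mathrm{ad}_w^*v$, which goes through exactly as in \cite{EMP06}; the paper states this explicitly and spends no effort on it. The real obstruction is local at the \emph{finite} boundary $\partial M=S^1\times\{0\}$ --- it would be present just as much for the unit disk, which has no end at all --- and it concerns coercivity of the non-compact factor in $H^s$, not compactness of the remainder. As written, your argument would ``prove'' Fredholmness for domains with boundary by the same mechanism that works for closed surfaces, which is exactly what \cite{EMP06} could not do (they obtained only the $L^2$-completion statement, Proposition \ref{prop:OLDFred}); the gap is that $\Psi(t)$, or equivalently $\Omega_t$, is not known to be invertible on $T_e\mathscr{D}_\mu^s$ until the boundary terms are controlled.
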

\hskip 0.5cm
In the next section we recall the basic setup from \cite{EMP06} and \cite{MP10}.
The proof of Theorem \ref{thm:Fred-b}
will be given in Sections \ref{sec:Fred} and \ref{sec:Fred-B}.
The key element of the proof involves deriving lower bounds for the invertible part of
the derivative $\text{d}{\exp}_e$ with respect to a suitably chosen Sobolev-type norm
defined on the space of stream functions on the manifold. The main idea here is that we have
an operator that is invertible because it is positive-definite in low Sobolev norms, but in the standard
higher Sobolev norms it is not due to boundary terms; however by weighting the coefficients differently
we can make the operator positive-definite in the new inner product up to leading order.

\section{The setup: Jacobi fields and the exponential map}
\label{sec:Back}

\hskip 0.5cm
We first collect a few well known facts about Fredholm mappings.
A bounded linear operator $L$ between Banach spaces is said to be
Fredholm if it has finite dimensional kernel and cokernel.
It then follows from the open mapping theorem that $\text{ran}\, L$ is closed.
$L$ is said to be semi-Fredholm if it has closed range and either its kernel or cokernel
is of finite dimension.
The index of $L$ is defined as
$
\mathrm{ind}\, L = \dim\ker L - \dim\mathrm{coker}\, L.
$
The set of semi-Fredholm operators is an open subset in the space of
all bounded linear operators and the index is a continuous function on this set
into $\mathbb{Z}\cup\left\{ \pm\infty\right\}$, cf. Kato \cite{Kato66}.
A $C^1$ map $f$ between Banach manifolds is called Fredholm
if its Fr\'echet derivative $df(p)$ is a Fredholm operator at each point $p$ in the domain of $f$.
If the domain is connected then the index of the derivative is by definition
the index of $f$, cf. Smale \cite{Smale65}.

\hskip 0.5cm
Let $\gamma$ be a geodesic in a Riemannian Hilbert manifold.
A point $q=\gamma(t)$ is said to be conjugate to $p=\gamma(0)$ if the derivative
${d}\exp_p(t\dot{\gamma}(0))$
is not an isomorphism considered as a linear operator between the tangent spaces at $p$ and $q$.
It is called monoconjugate if $d\exp_p(t\dot{\gamma}(0))$ fails to be injective
and epiconjugate if $d\exp_p(t\dot{\gamma}(0))$ fails to be surjective.
In general exponential maps of infinite dimensional Riemannian manifolds are not Fredholm.
For example, the antipodal points on the unit sphere in a Hilbert space with the induced metric
are conjugate along any great circle and the differential of the corresponding exponential map
has infinite dimensional kernel.
An ellipsoid constructed by Grossman \cite{Grossman65} provides
another example as it contains a sequence of monoconjugate points along a geodesic arc
converging to a limit point at which the derivative of the exponential map is injective but not surjective.
Such pathological phenomena are ruled out by the Fredholm property
because in this case monoconjugate and epiconjugate points must coincide, have finite multiplicities
and cannot cluster along finite geodesic segments.

\hskip 0.5cm
Let $M$ be a Riemannian manifold of dimension $n=2$ with boundary $\partial M$ and assume $s>2$.
%
%
%
%
%
%
Given any vector $v_0$ in $T_{e}\mathscr{D}_{\mu}^{s}$ let $\eta(t) = \exp_e(tv_0)$ be the geodesic of the $L^2$ metric
starting from the identity with velocity $v_0$.
The derivative of the exponential map at $tv_0$ can be expressed in terms of the Jacobi fields.
Since the curvature tensor $\mathcal{R}$ of the $L^2$ metric is bounded in the $H^s$ topology
it follows that the solutions of the Jacobi equation
\begin{equation} \label{eq:Jacobi}
J'' + \mathcal{R}(J,\dot{\eta})\dot{\eta} = 0
\end{equation}
along $\eta(t)$ with initial conditions
\begin{equation} \label{eq:Jacobi-initial}
J(0)=0,
\quad
J'(0) = w_0
\end{equation}
are unique and persist (as long as the geodesic is defined) by the standard ODE theory on Banach manifolds,
cf. \cite{Misiolek92}. Define the Jacobi field solution operator $\Phi_t$ by
\begin{equation} \label{eq:dexp}
w_0 \to \Phi_t w_0
=
d\exp_{e}(tv_0)tw_0 = J(t).
\end{equation}
%

\hskip 0.5cm
Next, recall that for any $\eta \in \mathscr{D}_\mu^s$ the group adjoint operator on
$T_{e}\mathscr{D}_{\mu}^{s}$ is given by $\mathrm{Ad}_{\eta} = dR_{\eta^{-1}}dL_{\eta}$
where
$R_{\eta}$ and $L_{\eta}$ denote the right and left translations by $\eta$.
Consequently, given any $v, w \in T_e\mathscr{D}_\mu^s$ we have
\begin{equation} \label{eq:Ad}
w \to \mathrm{Ad}_{\eta} w
=
\eta_\ast w
=
D\eta \circ \eta^{-1}( w \circ \eta^{-1})
\end{equation}
and the corresponding algebra adjoint operator
\begin{equation} \label{eq:ad}
\mathrm{ad}_{v}w = - [v,w].
\end{equation}
The associated coadjoint operators are defined using the $L^2$ inner product by
\begin{equation} \label{eq:grpcoAd-1}
\langle \mathrm{Ad}_{\eta}^{*}v,w\rangle _{L^{2}}
=
\langle v,\mathrm{Ad}_{\eta}w \rangle _{L^{2}}
\end{equation}
and
\begin{equation} \label{eq:grpcoad-1}
\langle \mathrm{ad}_{v}^{*}u,w \rangle _{L^{2}}
=
\langle u,\mathrm{ad}_{v}w \rangle _{L^2}
\end{equation}
for any $u, v$ and $w \in T_e\mathscr{D}_\mu^s$.
Our general strategy of the proof of Theorem 1 will be similar to that in the case when $M$ has no boundary. The proofs of the following result can be found in \cite{MP10}.
\begin{proposition} \label{prop:OLD}
Let $v_0 \in T_e\mathscr{D}_\mu^s$ and let $\eta(t)$ be the geodesic of the $L^2$ metric \eqref{eq:L2}
in $\mathscr{D}_{\mu}^{s}$ starting from the identity $e$ with velocity $v_0$.
Then $\Phi_t$ defined in \eqref{eq:dexp} is a family of bounded linear operators from
$T_{e}\mathscr{D}_{\mu}^{s}$ to $T_{\eta(t)}\mathscr{D}_{\mu}^{s}$.
Furthermore, if $v_0 \in T_e\mathscr{D}_\mu^{s+1}$ then $\Phi_t$ can be represented as
\begin{equation} \label{eq:solop}
\Phi_t = D\eta(t)\big( \Omega_t-\Gamma_t \big)
\end{equation}
where $\Omega_t$ and $\Gamma_t$ are bounded operators on $T_e\mathscr{D}_\mu^s$
given by
\begin{align}
&\Omega_t
=
\int_{0}^{t}\mathrm{Ad}_{\eta(\tau)^{-1}}\mathrm{Ad}_{\eta(\tau)^{-1}}^{*}\, d\tau
\label{eq:Omega} \\  \label{eq:Gamma}
&\Gamma_t
=
\int_{0}^{t}\mathrm{Ad}_{\eta(\tau)^{-1}}K_{v(\tau)}dR_{\eta^{-1}(\tau)}\Phi_\tau \, d\tau
\end{align}
and $K_v$ is a compact operator on $T_e\mathscr{D}_\mu^s$ given by
\begin{align} \label{eq:K}
w \to K_{v(t)} w = \mathrm{ad}_w^\ast v(t),
\quad
w \in T_e\mathscr{D}_\mu^s
\end{align}
and where $v(t)$ is the solution of the Cauchy problem \eqref{eq:euler}-\eqref{eq:eulerIC}.
\end{proposition}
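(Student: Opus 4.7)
The plan splits into the two assertions of the proposition. For the first, since the curvature $\mathcal{R}$ of the right-invariant $L^2$ metric is a bounded trilinear operator on each $T_\eta\mathscr{D}_\mu^s$, the Jacobi equation \eqref{eq:Jacobi} becomes, after right-translating along $\eta$, a linear second-order ODE in a fixed Hilbert space with bounded, continuous coefficients. Existence, uniqueness, and persistence of $J$ follow from Picard iteration, and a Gronwall estimate on the first-order system in $(J, J')$ yields $\|\Phi_t w_0\|_{H^s}\le C(t)\|w_0\|_{H^s}$, establishing the boundedness of $\Phi_t$.

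For the explicit formula \eqref{eq:solop}, I would begin with the integrated form of the Euler--Arnold equation: since $\frac{d}{dt}(\mathrm{Ad}_{\eta(t)}^* v(t)) = \mathrm{Ad}_{\eta(t)}^*(\dot{v} + \mathrm{ad}_v^* v) = 0$ along any $L^2$-geodesic, momentum is conserved and $v(t) = \mathrm{Ad}_{\eta(t)^{-1}}^* v_0$. Consider the variational family $\eta_\epsilon(t) = \exp_e(t(v_0 + \epsilon w_0))$, so that $J(t) := \partial_\epsilon \eta_\epsilon(t)|_{\epsilon=0} = \Phi_t w_0$. Write $J(t) = D\eta(t)\, z(t)$ with $z(t) \in T_e\mathscr{D}_\mu^s$, and set $y(t) := J(t)\circ\eta(t)^{-1} = \mathrm{Ad}_{\eta(t)} z(t)$. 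Differentiating the corresponding Eulerian velocities $v_\epsilon(t) = \mathrm{Ad}_{\eta_\epsilon(t)^{-1}}^* (v_0 + \epsilon w_0)$ at $\epsilon = 0$, and using the identity $\partial_\epsilon \mathrm{Ad}_{\eta_\epsilon^{-1}}^*|_{\epsilon=0} = -\mathrm{ad}_y^* \mathrm{Ad}_{\eta^{-1}}^*$ together with $\mathrm{Ad}_{\eta^{-1}}^* v_0 = v(t)$, gives
$$\xi(t) := \partial_\epsilon v_\epsilon(t)|_{\epsilon=0} = \mathrm{Ad}_{\eta(t)^{-1}}^* w_0 - K_{v(t)}\, y(t).$$

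To connect $\xi$ with $z$, differentiating $\dot{\eta}_\epsilon = v_\epsilon \circ \eta_\epsilon$ in $\epsilon$ at $0$ and right-translating yields $\partial_t y = \xi + \mathrm{ad}_v y$; combining this with the standard identity $\partial_t \mathrm{Ad}_{\eta^{-1}} = -\mathrm{Ad}_{\eta^{-1}} \mathrm{ad}_v$ applied to $z = \mathrm{Ad}_{\eta^{-1}} y$ collapses the calculation to the clean relation $\dot{z}(t) = \mathrm{Ad}_{\eta(t)^{-1}} \xi(t)$. Substituting the expression for $\xi$ and writing $y(t) = dR_{\eta(t)^{-1}} \Phi_t w_0$, then integrating from $0$ to $t$ with $z(0) = 0$, produces $z(t) = (\Omega_t - \Gamma_t) w_0$, and therefore $J(t) = D\eta(t)(\Omega_t - \Gamma_t) w_0$, exactly as claimed.

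The main subtlety is verifying that each operator appearing above preserves the regularity and boundary constraint of $T_e\mathscr{D}_\mu^s$, namely $H^s$ divergence-free vector fields parallel to $\partial M$. The coadjoint $\mathrm{ad}_v^*$ on this space is defined via an $L^2$-orthogonal Hodge-type projection onto divergence-free boundary-parallel fields and typically loses one derivative; this is precisely what motivates the hypothesis $v_0 \in T_e\mathscr{D}_\mu^{s+1}$, since one needs $v(t)\in H^{s+1}$ for $K_{v(t)}$ and $\mathrm{Ad}_{\eta(t)^{-1}}^*$ to map $H^s$ boundedly into $H^s$ along the geodesic. The compactness of $K_{v(t)}$, crucial to the Fredholm argument later in the paper, will further rely on the two-dimensionality of $M$: rewriting $K_v w = \mathrm{ad}_w^* v$ in terms of stream functions realizes it as an integral operator of order $-1$, hence compact on $H^s$.
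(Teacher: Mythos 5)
Your derivation is correct and follows essentially the same route as the proof the paper defers to (Propositions 4 and 8 of \cite{EMP06}): conservation of momentum $\mathrm{Ad}_{\eta(t)}^{*}v(t)=v_{0}$, differentiation of the variational family $\eta_{\epsilon}(t)=\exp_e(t(v_0+\epsilon w_0))$ to get $\xi=\mathrm{Ad}_{\eta^{-1}}^{*}w_0-K_{v}y$, and the substitution $J=D\eta\,z$, $y=\mathrm{Ad}_{\eta}z$ collapsing the computation to $\dot z=\mathrm{Ad}_{\eta^{-1}}\xi$, which integrates to $z=(\Omega_t-\Gamma_t)w_0$. Your closing observations on the loss of a derivative in the adjoint operators (the reason for the $H^{s+1}$ hypothesis) and on the compactness of $K_v$ in two dimensions correctly identify exactly the points the paper highlights in the remark following the proposition.
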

\begin{proof}
See \cite{EMP06}, Prop. 4 and Prop. 8.
\end{proof}
\begin{rem} \upshape
Note that the decomposition \eqref{eq:solop}-\eqref{eq:K} must be applied with care.
This is due to the loss of derivatives involved in calculating the differential of the left translation operator
$\xi \to L_\eta \xi$ and consequently of the adjoint operator in \eqref{eq:Ad}. This is why we consider $v_0$, and hence $\eta(t)$, in $H^{s+1}$ rather than $H^s$.
\end{rem}

\hskip 0.5cm
As mentioned in the Introduction we also have the following
\begin{proposition} \label{prop:OLDFred}
For any $v_0 \in T_e\mathscr{D}_\mu^s$ the derivative $d\exp_e(tv_0)$ extends to
a Fredholm operator on the $L^2$-completions
$\overline{T_e\mathscr{D}_\mu}^{_{L^2}}$
and~
$\overline{T_{\exp_e(tv_0)}\mathscr{D}_\mu}^{_{L^2}}$.
\end{proposition}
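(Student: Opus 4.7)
The plan is to use the decomposition $\Phi_t = D\eta(t)(\Omega_t - \Gamma_t)$ supplied by Proposition \ref{prop:OLD} and analyze each factor as it acts on the $L^2$-completion of $T_e\mathscr{D}_\mu^s$. The first factor $D\eta(t)$ extends to a bounded isomorphism on $L^2$ because $\eta(t)$ is volume-preserving and $D\eta(t)\in L^\infty$ (the latter thanks to $s>2$ and the Sobolev embedding); concretely, composition with $\eta(t)^{-1}$ is an $L^2$-isometry and multiplication by $D\eta(t)$ is a bounded operator with bounded inverse. Next, I would observe that $\Omega_t$ extends to a bounded self-adjoint positive operator on the $L^2$-completion, with
\[
\langle \Omega_t w, w\rangle_{L^2} = \int_0^t \|\Ad_{\eta(\tau)^{-1}}^* w\|_{L^2}^2\, d\tau.
\]
Because $\Ad_{\eta(0)^{-1}} = \id$, a short continuity argument produces $\delta>0$ such that $\Ad_{\eta(\tau)^{-1}}\Ad_{\eta(\tau)^{-1}}^* \geq \tfrac12 \id$ on $[0,\delta]$, and integrating gives the coercive lower bound $\Omega_t \geq c\min(t,\delta)\,\id$, so $\Omega_t$ is invertible with bounded inverse on the $L^2$-completion.

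The crux is to verify that $\Gamma_t$ extends to a \emph{compact} operator on the $L^2$-completion. The decisive input is that $K_{v(\tau)} w = \ad_w^* v(\tau)$, known from Proposition \ref{prop:OLD} to be compact on $T_e\mathscr{D}_\mu^s$, actually extends to a compact operator on $L^2$: the $L^2$-coadjoint transposes the Lie bracket and then applies the Leray projector onto divergence-free vector fields tangent to $\partial M$, and the resulting elliptic smoothing, together with the multiplier bound coming from $v(\tau) \in H^s$, yields an estimate $\|K_{v(\tau)} w\|_{H^1} \lesssim \|w\|_{L^2}$. Compactness on $L^2$ then follows by a Rellich argument. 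Since $\Ad_{\eta(\tau)^{-1}}$, $dR_{\eta^{-1}(\tau)}$ and $\Phi_\tau$ are uniformly $L^2$-bounded on $\tau \in [0,t]$, the integrand in \eqref{eq:Gamma} is compact with uniformly controlled operator norm, and Bochner-integrating in $\tau$ preserves compactness.

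With these ingredients in hand, $\Omega_t - \Gamma_t = \Omega_t(I - \Omega_t^{-1}\Gamma_t)$ is the composition of an invertible operator and an operator of the form identity-plus-compact, hence Fredholm of index zero on the $L^2$-completion; composing with the $L^2$-isomorphism $D\eta(t)$ preserves both properties. The restriction $v_0 \in H^{s+1}$ inherited from Proposition \ref{prop:OLD} (flagged in the preceding Remark) would be removed by a density argument, using continuity of $\Phi_t$ in $v_0$ in the relevant operator topology together with the stability of the Fredholm index under small perturbations. The principal obstacle is the $L^2$-compactness of $K_{v(\tau)}$ in the presence of the boundary, which rests on elliptic regularity for the Leray projector with the mixed boundary conditions arising from $\mathrm{div}\, w = 0$ together with $\langle w,\nu\rangle = 0$ on $\partial M$.
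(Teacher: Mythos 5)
Your overall route is exactly the paper's: use the factorization $\Phi_t = D\eta(t)\big(\Omega_t-\Gamma_t\big)$ from Proposition \ref{prop:OLD}, check that $D\eta(t)$ is an $L^2$-isomorphism, that $\Omega_t$ is a positive, self-adjoint, coercive (hence invertible) operator on the $L^2$-completion, and that $\Gamma_t$ is compact there because $K_{v(\tau)}$ is; the paper's proof is precisely this sketch, with details deferred to \cite{EMP06}. Your treatment of $\Omega_t$ is fine in substance, though the ``continuity near $\tau=0$'' step is both unnecessary and delicate: $\tau\mapsto \mathrm{Ad}_{\eta(\tau)^{-1}}$ is not norm-continuous on $L^2$ (composition operators do not depend norm-continuously on the diffeomorphism), and the clean way to get coercivity for all $t$ is the paper's Lemma \ref{lem:L2}, which uses only $\big\|\mathrm{Ad}^{*}_{\eta(\tau)^{-1}}w\big\|_{L^2}\ge \big\|\mathrm{Ad}^{*}_{\eta(\tau)}\big\|_{L(L^2)}^{-1}\|w\|_{L^2}$.

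The step you correctly identify as the crux --- $L^2$-compactness of $K_{v(\tau)}$ --- is where your justification has a genuine gap. The Leray projector is a zeroth-order operator and provides no smoothing, and the claimed estimate $\|K_{v}w\|_{H^1}\lesssim \|w\|_{L^2}$ is not available for general $v\in H^s$ with $2<s\le 3$: after the standard cancellation one has, up to sign and a gradient absorbed by the projector, $K_vw=P\big(\nabla_w v+(\nabla v)^{\mathrm{T}}w\big)$, which is multiplication of $w$ by first derivatives of $v$, and gaining a full derivative on $w$ would require $\nabla^2v\in L^\infty$. The mechanism that actually yields compactness is two-dimensional and runs through the vorticity/stream-function picture: writing $\omega=\mathrm{curl}\,v\in H^{s-1}$ and $w=\sgrad\varphi$ (so that $\|w\|_{L^2}=\|\nabla\varphi\|_{L^2}$), one has $\mathrm{curl}(K_vw)=\pm\, w\cdot\nabla\omega=\pm\,\diver\!\big(\varphi\,\sgrad\omega\big)$, hence the stream function of $K_vw$ is $\pm\,\Delta_0^{-1}\diver\!\big(\varphi\,\sgrad\omega\big)$. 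The map $\varphi\mapsto \varphi\,\sgrad\omega$ is compact from $H^1_0$ into $L^2$ because $H^1\hookrightarrow L^p$ is compact for $p<\infty$ in two dimensions (with the decay of $\nabla\omega$ at infinity supplying the tightness needed on the non-compact half-cylinder), while $\sgrad\omega\in H^{s-2}\hookrightarrow L^q$ for the conjugate exponent; composing with the bounded operator $\Delta_0^{-1}\diver\colon L^2\to H^1_0$ gives compactness of $K_v$ on the $L^2$-completion. So the compactness comes from a compact Sobolev embedding applied to the structure of $\mathrm{ad}_w^{*}v$, not from elliptic regularity of the projector; with that step repaired, the rest of your argument (limits of sums of compact operators for $\Gamma_t$, invertible plus compact, index zero, density in $v_0$) goes through as in the paper.
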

\begin{proof}
A detailed proof may be found in \cite{EMP06}, Thm. 2, but the main idea is as follows. The operator \eqref{eq:Omega} is invertible on $\overline{T_e\mathscr{D}_\mu}^{_{L^2}}$. This follows from Lemma \ref{lem:L2} below and self-adjointness in the $L^{2}$ inner product. Compactness of the operator \eqref{eq:Gamma} on $\overline{T_e\mathscr{D}_\mu}^{_{L^2}}$ follows from compactness of the operator $K_{v}$, and hence compactness of the composition appearing under the integral in \eqref{eq:Gamma}, and finally from viewing the integral as a limit of sums of compact operators. This represents $d\exp_e(tv_0)$ as the sum of an invertible operator and compact operator which implies $d\exp_e(tv_0)$ is Fredholm of index zero.
\end{proof}
\hskip 0.5cm
In particular, it follows that monoconjugate points along $\eta(t)$ in $\mathscr{D}^s_\mu$
have finite multiplicity.

\section{Proof of Theorem 1: Preliminary Estimates}
\label{sec:Fred}

\hskip 0.5cm
To show that the $L^2$ exponential map on $\mathscr{D}^s_\mu$ is a Fredholm map we will prove that
for each $t>0$ its derivative $\Phi_t$ is a bounded Fredholm operator from $T_e\mathscr{D}^s_e$
to $T_{\eta(t)}\mathscr{D}^s_\mu$; that is, $\Phi_{t}$ can be expressed as the sum of an invertible operator and a compact operator on $T_e\mathcal{D}^s_{\mu}$.
We will assume that the initial divergence free vector field $v_0$ in \eqref{eq:dexp} is of class $C^\infty$.
The general $H^s$ case will then follow from a density argument, just as in \cite{EMP06}. Compactness of \eqref{eq:Gamma} then follows from Proposition \ref{prop:OLD} as described in the proof of Proposition \ref{prop:OLDFred}.

\hskip 0.5cm
It remains to prove that the operator $\Omega_t$, defined by \eqref{eq:Omega}, is invertible
on the tangent space $T_e\mathscr{D}_\mu^s$. We begin with an $L^2$ estimate which is straightforward.
\begin{lemma} \label{lem:L2}
Assume $s>2$. Given $v_0 \in T_e\mathscr{D}_\mu^s$ let $\eta(t)=\exp_e tv_0$
be the corresponding $L^2$ geodesic.
For any $w \in T_e\mathscr{D}_\mu^s$ and any $t \geq 0$ we have
\begin{align} \label{eq:OmegaL2}
\langle w, \Omega_t w \rangle_{L^2}
\ge
C_t \| w \|_{L^2}^2
\end{align}
where $C_t = \int_0^t \| D\eta(\tau) \|_\infty^{-2} d\tau$.
\end{lemma}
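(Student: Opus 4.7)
The estimate is a fairly direct consequence of the definition of $\Omega_t$ together with the fact that $\eta(\tau)$ is volume-preserving. The plan is to rewrite the quadratic form, reduce to a pointwise-in-time lower bound via duality, and bound $\mathrm{Ad}_\eta$ from above using change of variables. There is no serious obstacle here; care is mainly needed to handle the $L^2$ adjoint on the Lie algebra of divergence-free fields tangent to $\partial M$.

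\textbf{Step 1: Unwind the quadratic form.} Since $\Omega_t$ is defined by an integral of self-adjoint operators, interchanging the inner product with integration in $\tau$ gives
\begin{equation*}
\langle w, \Omega_t w \rangle_{L^2}
= \int_0^t \langle w, \mathrm{Ad}_{\eta(\tau)^{-1}} \mathrm{Ad}_{\eta(\tau)^{-1}}^* w\rangle_{L^2}\, d\tau
= \int_0^t \bigl\| \mathrm{Ad}_{\eta(\tau)^{-1}}^* w \bigr\|_{L^2}^2\, d\tau,
\end{equation*}
where in the second equality I move $\mathrm{Ad}_{\eta^{-1}}$ across the inner product using \eqref{eq:grpcoAd-1}. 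It therefore suffices to show the pointwise bound
$\| \mathrm{Ad}_{\eta(\tau)^{-1}}^* w\|_{L^2}^2 \ge \| D\eta(\tau)\|_\infty^{-2} \| w\|_{L^2}^2$ for each fixed $\tau$.

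\textbf{Step 2: Pointwise lower bound via duality.} Writing $w = \mathrm{Ad}_{\eta^{-1}} \mathrm{Ad}_\eta w$ and using \eqref{eq:grpcoAd-1} to transfer $\mathrm{Ad}_{\eta^{-1}}$, I get
\begin{equation*}
\| w \|_{L^2}^2
= \langle w, \mathrm{Ad}_{\eta^{-1}} \mathrm{Ad}_\eta w \rangle_{L^2}
= \langle \mathrm{Ad}_{\eta^{-1}}^* w, \mathrm{Ad}_\eta w\rangle_{L^2}
\le \| \mathrm{Ad}_{\eta^{-1}}^* w\|_{L^2}\, \| \mathrm{Ad}_\eta w\|_{L^2}
\end{equation*}
by Cauchy--Schwarz. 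Thus the desired lower bound follows from an upper bound on $\| \mathrm{Ad}_\eta w\|_{L^2}$.

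\textbf{Step 3: Upper bound from change of variables.} Using the explicit formula \eqref{eq:Ad} for the group adjoint together with the fact that $\eta$ preserves the volume form $\mu$, I change variables $y = \eta^{-1}(x)$ to obtain
\begin{equation*}
\| \mathrm{Ad}_\eta w\|_{L^2}^2
= \int_M \bigl| D\eta(\eta^{-1}(x)) \cdot w(\eta^{-1}(x))\bigr|^2\, d\mu(x)
= \int_M \bigl| D\eta(y) \cdot w(y)\bigr|^2\, d\mu(y)
\le \| D\eta\|_\infty^2\, \| w\|_{L^2}^2.
\end{equation*}
Combining with Step 2 gives $\| \mathrm{Ad}_{\eta^{-1}}^* w\|_{L^2} \ge \| D\eta\|_\infty^{-1} \| w\|_{L^2}$. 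Squaring and inserting into the expression from Step 1 yields \eqref{eq:OmegaL2} with the stated constant $C_t$. This completes the plan.
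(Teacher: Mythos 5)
Your proposal is correct and follows essentially the same route as the paper: both reduce $\langle w,\Omega_t w\rangle_{L^2}$ to $\int_0^t \|\mathrm{Ad}_{\eta(\tau)^{-1}}^*w\|_{L^2}^2\,d\tau$, lower-bound this pointwise in $\tau$ by dualizing against $\mathrm{Ad}_{\eta}$, and control $\|\mathrm{Ad}_\eta\|_{L(L^2)}$ by $\|D\eta\|_\infty$ via the change of variables for the volume-preserving $\eta$. Your Step 2 merely makes explicit the inverse-operator-norm bound that the paper writes as $\|\mathrm{Ad}_{\eta(\tau)}^*\|_{L(L^2)}^{-2}$.
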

\begin{proof}
From \eqref{eq:Omega} we compute
\begin{align*}
\langle w, \Omega_t w \rangle_{L^2}
&=
\int_0^t
\big\langle w, \mathrm{Ad}_{\eta(\tau)^{-1}}\mathrm{Ad}_{\eta(\tau)^{-1}}^\ast w \big\rangle_{L^2} d\tau
\\
&=
\int_0^t \big\| \mathrm{Ad}_{\eta(\tau)^{-1}}^\ast w \big\|_{L^2}^2 d\tau
\geq
\| w \|_{L^2}^2 \int_0^t \big\| \mathrm{Ad}_{\eta(\tau)}^\ast \|_{L(L^2)}^{-2} d\tau
\end{align*}
and since $\mathrm{Ad}_{\eta}^\ast$ is an $L^2$ adjoint of $\mathrm{Ad}_{\eta}$,
formula \eqref{eq:Ad} implies
$$
\| \mathrm{Ad}_{\eta(t)}^\ast \|_{L(L^2)}^2
=
\| \mathrm{Ad}_{\eta(t)} \|_{L(L^2)}^2
\lesssim
\| D\eta(t)^{\transpose} D\eta(t) \|_\infty
$$
which gives \eqref{eq:OmegaL2}.
\end{proof}

\hskip 0.5cm
Next, we proceed to derive the estimate in $H^s$ norms. It will be convenient to work with
stream functions on $M = S^1 \times [0,\infty)$.
More precisely, we introduce the space
\begin{align*}
\mathcal{F}^{s+1}(M) =
\big\{ f \in H^{s+1}(M):
~&\text{$f$ vanishes rapidly as $y \to \infty$ and $f\vert_{\partial M}=0$} \big\}
\end{align*}
so that
$$
T_e\Diffmu^s(M)
=
\big\{
v_f = {-}\partial_y f \tfrac{\partial}{\partial x} {+} \partial_x f \tfrac{\partial}{\partial y}: f \in \mathcal{F}^{s+1}(M)
\big\}.
$$
From \eqref{eq:Ad} and \eqref{eq:grpcoAd-1} we have
\begin{align} \label{eq:Lambda_inv}
\mathrm{Ad}_{\eta^{-1}(t)} \mathrm{Ad}_{\eta^{-1}(t)}^\ast v_f
=
v_{\Lambda_t^{-1} f}
\quad \text{where} \quad
\Lambda_t = \Delta_0^{-1} \circ R_{\eta(t)} \circ \Delta \circ R_{\eta^{-1}(t)}.
\end{align}
Here $f=\Delta_0^{-1} g$ is the unique solution of the Dirichlet problem
$\Delta f = g$ with $f\vert_{\partial M}=0$ and $R_{\eta}g = g \circ \eta$.
Our goal therefore reduces to establishing the following
\begin{equation} \label{eq:OMEGA}
\text{Claim: \it For any $t>0$ the operator} \;\;
f \to \widehat\Omega_t f = \int_0^t \Lambda_\tau^{-1} f \, d\tau
\;\;
\text{\it is invertible on $\mathcal{F}^{s+1}(M)$}.
\end{equation}

\hskip 0.5cm
To this end we will proceed indirectly since the formula for $\Lambda_t$ is somewhat simpler to work with
than the formula for the inverse
$\Lambda^{-1}_t = R_{\eta(t)}\circ\Delta_0^{-1}\circ R_{\eta^{-1}(t)} \circ \Delta$.
Our approach to proving the claim \eqref{eq:OMEGA} is as follows. For some constants $B_{0},...,B_{s}$ we define a semi-inner product
on $\mathcal{F}^{s+1}$ by
\begin{equation}
\llangle f,g \rrangle_{s+1}
=
\sum_{j=0}^s B_j \langle \partial_x^j \partial_y^{s-j} \grad f, \partial_x^j \partial_y^{s-j} \grad g \rangle_{L^2}
\end{equation}
with associated semi-norm $\| f \|_{s+1} = \llangle f, f \rrangle_{s+1}^{1/2}$. Then, we show that the constants $B_{0},...,B_{s}$ can be chosen to be positive so that this semi-norm defines a norm, equivalent to the Sobolev $H^{s+1}$ norm, with

\begin{equation}
\llangle f,g \rrangle_{s+1}
\ge
K \lVert f \rVert^2_{s+1} - C \lVert f \rVert_{s+1} \lVert f \rVert_{\dot{H}^s}
\end{equation}

for $g=\Lambda_{t}f$, where $\lVert f \rVert_{\dot{H}^s}$ denotes the homogeneous Sobolev norm defined by \eqref{dot-H} below.  Applying this estimate to $f=\Lambda_t^{-1}g$ shows that $\widehat{\Omega}_{t}$ has closed range on $\mathcal{F}^{s+1}$. This, together with Lemma 1, implies that $\widehat{\Omega}_{t}$ is semi-Fredholm with trivial kernel whose index at $t=0$ is zero. Since the index is constant on connected component of the space of semi-Fredholm operators (cf. \cite{Kato66}) we conclude that the index is always zero so that $\widehat{\Omega}_{t}$ has trivial cokernel and is therefore invertible.
To carry out this plan we need to estimate the boundary terms and this is our main goal here; the analysis of these terms begins in Proposition 3 below.
The following observation will be convenient in the calculations.
\begin{lemma} \label{twistedlaplacianlemma}
Let $f\colon M\to \mathbb{R}$ be any $H^{s+1}$ function vanishing at infinity and let $\eta \in \Diffmu(M)$
be a smooth area-preserving diffeomorphism. Then
\begin{equation}\label{lambdaformula}
\Laplacian(f\circ\eta^{-1})\circ\eta = \diver{(G_\eta \nabla f)}
\qquad \text{where} \quad
G_\eta = (D\eta^{\transpose}D\eta)^{-1} = \left( \begin{matrix} \lvert\partial_y\eta\rvert^2 & -\langle \partial_x\eta, \partial_y\eta\rangle \\
-\langle \partial_x\eta, \partial_y\eta\rangle & \lvert \partial_x\eta\rvert^2\end{matrix}\right).
\end{equation}
\end{lemma}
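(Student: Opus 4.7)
The plan is to interpret the identity as an instance of the naturality of the divergence operator under area-preserving diffeomorphisms. The key ingredient I would establish first is the intertwining relation
\begin{equation*}
(\diver X)\circ\eta \;=\; \diver(\eta^{\ast}X)
\end{equation*}
valid for any smooth vector field $X$ and any $\eta\in\Diffmu(M)$, where the pullback is defined by $\eta^{\ast}X := (D\eta)^{-1}(X\circ\eta)$. This follows from $\mathcal{L}_{X}\mu = (\diver X)\,\mu$ together with the naturality of the Lie derivative, $\eta^{\ast}(\mathcal{L}_{X}\mu) = \mathcal{L}_{\eta^{\ast}X}(\eta^{\ast}\mu)$, and the area-preserving hypothesis $\eta^{\ast}\mu = \mu$.

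Next, I would set $u := f\circ\eta^{-1}$ and differentiate the tautology $u\circ\eta = f$ via the chain rule to obtain $(D\eta)^{\transpose}(\nabla u\circ\eta) = \nabla f$, so that $\nabla u\circ\eta = (D\eta)^{-\transpose}\nabla f$. Pulling back the vector field $X = \nabla u$ then gives
\begin{equation*}
\eta^{\ast}\nabla u \;=\; (D\eta)^{-1}(D\eta)^{-\transpose}\nabla f \;=\; (D\eta^{\transpose}D\eta)^{-1}\nabla f \;=\; G_{\eta}\nabla f.
\end{equation*}
Substituting $X = \nabla u$ into the intertwining relation above yields
\begin{equation*}
\Laplacian(f\circ\eta^{-1})\circ\eta \;=\; (\diver\nabla u)\circ\eta \;=\; \diver(\eta^{\ast}\nabla u) \;=\; \diver(G_{\eta}\nabla f),
\end{equation*}
which is precisely the claimed identity. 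The explicit $2\times 2$ matrix representation of $G_{\eta}$ then drops out of the cofactor formula for the inverse of a $2\times 2$ matrix, combined with $\det(D\eta^{\transpose}D\eta) = (\det D\eta)^{2} = 1$ from area-preservation.

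I do not expect any serious obstacle: the argument is essentially bookkeeping once the coordinate-free identity has been isolated. The only point requiring care is the chain rule for composition with $\eta^{-1}$, which can be handled cleanly by differentiating $u\circ\eta = f$ rather than $u = f\circ\eta^{-1}$ directly. A more pedestrian alternative would be to expand both sides pointwise: the terms quadratic in $\nabla f$ match immediately from the relation $(D\eta^{-1})(D\eta^{-1})^{\transpose}\circ\eta = G_{\eta}$, and the remaining linear-in-$\nabla f$ terms reconcile via $\det D\eta \equiv 1$, but this direct approach obscures the geometric origin of the identity.
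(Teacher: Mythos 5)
Your proof is correct, but it takes a genuinely different route from the paper's. The paper argues weakly: it pairs $\Laplacian(f\circ\eta^{-1})\circ\eta$ against an arbitrary test function $g$ vanishing on $\partial M$, uses the change-of-variables formula (valid since $\det D\eta=1$) and integration by parts twice to transform the pairing into $\int_M g\,\diver\big((D\eta^{\transpose}D\eta)^{-1}\grad f\big)\,dxdy$, and concludes by the arbitrariness of $g$. You instead isolate the pointwise naturality identity $(\diver X)\circ\eta=\diver(\eta^{\ast}X)$ from $\mathcal{L}_X\mu=(\diver X)\mu$, the naturality of the Lie derivative, and $\eta^{\ast}\mu=\mu$, and then apply it to $X=\grad(f\circ\eta^{-1})$ after computing the pullback via the chain rule. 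Both arguments are sound and both use the same two inputs (area preservation and the chain rule $\grad(f\circ\eta^{-1})=(D\eta^{-1})^{\transpose}\grad f\circ\eta^{-1}$); your version is more geometric and makes transparent why $G_\eta$ must be $(D\eta^{\transpose}D\eta)^{-1}$, while the paper's weak formulation is more elementary, requires only one derivative of $f$ at a time under the integral sign, and delivers the identity in exactly the distributional form in which it is later consumed (it is only ever used inside integrations by parts). For $f$ merely in $H^{s+1}$ your pointwise Lie-derivative computation should strictly be performed for smooth $f$ and extended by density, or justified by the Sobolev embedding $H^{s+1}\hookrightarrow C^2$ for $s>2$ in two dimensions; this is a minor point, not a gap.
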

\begin{proof}
Let $g\colon M\to \mathbb{R}$ be any smooth function that vanishes on $\partial M$.
Then by the change of variables formula and an integration by parts we have
$$
\int_M g\Laplacian (f\circ \eta^{-1} ) \circ \eta \, dxdy
=
\int_M g\circ\eta^{-1} \, \Laplacian ( f\circ\eta^{-1} ) \, dxdy
=
-\int_M \langle \grad( g\circ\eta^{-1} ), \grad( f\circ\eta^{-1} ) \rangle \, dxdy.
$$
Observe that $\grad ( f\circ\eta^{-1} ) = (D\eta^{-1})^{\transpose} \grad f \circ \eta^{-1}$,
so that
\begin{align*}
\int_M g \Laplacian(f\circ \eta^{-1}) \circ \eta \, dxdy
&=
- \int_M
\langle (D\eta^{-1})^{\transpose} \grad f \circ\eta^{-1}, (D\eta^{-1})^{\transpose} \grad g \circ\eta^{-1} \rangle
\, dxdy
\\
&=
-\int_M \langle (D\eta^{\transpose})^{-1} \grad f, (D\eta^{\transpose})^{-1} \grad g \rangle \, dxdy,
\end{align*}
using the change of variables formula once more.
We conclude that
\begin{align*}
\int_M g\Laplacian(f\circ \eta^{-1} )\circ \eta \, dxdy
&=
-\int_M \langle (D\eta^{\transpose}D\eta)^{-1} \grad f, \grad g \rangle \, dxdy
\\
&=
\int_M g\diver{ \big( (D\eta^{\transpose}D\eta)^{-1} \grad f} \big) \, dxdy,
\end{align*}
and since this is true for every smooth $g$ we deduce \eqref{lambdaformula}.
\end{proof}

\hskip 0.5cm

The following inequality appears in \cite{EMP06} but without the boundary terms.

\begin{proposition}\label{gradientinequalityprop}
Let $\eta\in \Diffmu(M)$ be a $C^\infty$-smooth area-preserving diffeomorphism and let
$g = \Lambda f$ where $f \in \mathcal{F}^{s+1}(M)$.
For any nonnegative integers $m$ and $n$ let $f_{m,n} = \partial_x^m \partial_y^n f$
and $g_{m,n} = \partial_x^m \partial_y^n g$.
Then we have\footnote{Here we agree to the convention that the boundary integral is zero if $n=0$.}
\begin{multline}\label{firstpositive}
\langle \grad f_{m,n}, \grad g_{m,n} \rangle_{L^2}
\ge
K_{\eta} \lVert \grad f_{m,n} \rVert^2_{L^2}
-
C \lVert \eta\rVert_{C^{m+n+1}}^2 \lVert \grad f_{m,n} \rVert_{L^2} \lVert f \rVert_{\dot{H}^{m+n}}
\\
- \int_{\partial M}
f_{m+1,n}
\partial_x^m \partial_y^{n-1}
\Big( \partial_x g - \lvert\partial_y\eta\rvert^2\partial_x f + \langle \partial_x\eta,\partial_y\eta \rangle \partial_yf \Big)
\, dx
\end{multline}
where $K_{\eta} = \| D\eta \|_\infty^{-2}$, with $C$ some constant independent of $\eta$, and
\begin{equation}\label{dot-H}
\|f\|_{\dot{H}^{s+1}} = \sum_{0 \leq i+j \leq s} \big\|\nabla\partial_x^i \partial_y^j f \big\|_{L^2}.
\end{equation}
\end{proposition}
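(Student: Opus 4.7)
The starting point is Lemma \ref{twistedlaplacianlemma}, which rewrites the relation $g = \Lambda f$ as the twisted Poisson equation $\Delta g = \diver(G_\eta \grad f)$. Because $\Delta$ has constant coefficients, applying $\partial_x^m\partial_y^n$ and commuting yields
\begin{equation*}
\Delta g_{m,n} = \diver(G_\eta \grad f_{m,n}) + \mathcal{C}[f],
\end{equation*}
where $\mathcal{C}[f]$ is a differential expression of order $m+n+1$ in $f$ whose coefficients involve derivatives of $G_\eta$ of order up to $m+n+1$. I would then combine two applications of the divergence theorem---first pulling a derivative off $G_\eta\grad f_{m,n}$ inside $\int_M \grad f_{m,n}\cdot G_\eta\grad f_{m,n}$, and then rewriting $-\int_M f_{m,n}\Delta g_{m,n}$ back in gradient form---to obtain
\begin{align*}
\langle \grad f_{m,n}, \grad g_{m,n}\rangle_{L^2}
&= \int_M \grad f_{m,n}\cdot G_\eta\grad f_{m,n}\,dxdy - \int_M f_{m,n}\,\mathcal{C}[f]\,dxdy \\
&\quad + \int_{\partial M} f_{m,n}\bigl(\partial_\nu g_{m,n} - (G_\eta\grad f_{m,n})\cdot\nu\bigr)\,dx,
\end{align*}
after which the three terms on the right will be estimated separately.

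The first term is bounded below by $K_\eta \|\grad f_{m,n}\|_{L^2}^2$ because $G_\eta = (D\eta^{\transpose} D\eta)^{-1}$ has smallest eigenvalue $\|D\eta\|_\infty^{-2}$ (area preservation forces $\det G_\eta = 1$, so the two eigenvalues are reciprocals). To handle the interior commutator I would expand $\mathcal{C}[f]$ by Leibniz as a sum of products $(\partial^\alpha G_\eta)(\partial^\beta f)$ with $|\alpha|+|\beta|=m+n+2$ and $|\alpha|\ge 1$. Only the contributions with $|\beta|=m+n+1$ are \emph{a priori} troublesome; for these I would perform one additional integration by parts, preferring $x$-integrations (which generate no boundary since $x$ is periodic) in order to shift one derivative onto $f_{m,n}$. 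This both produces the factor $\grad f_{m,n}$ and reduces the $f$-derivative count to $m+n$, after which Cauchy--Schwarz together with the pointwise bound $\|\partial^\alpha G_\eta\|_\infty \lesssim \|\eta\|_{C^{|\alpha|+1}}^2$ yields $\big|\int_M f_{m,n}\mathcal{C}[f]\big| \lesssim \|\eta\|_{C^{m+n+1}}^2\|\grad f_{m,n}\|_{L^2}\|f\|_{\dot{H}^{m+n}}$.

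The boundary integral is the most delicate step. At $y=0$ the outward normal is $\nu=-\hat{y}$; a direct computation using the explicit matrix $G_\eta$ from Lemma~\ref{twistedlaplacianlemma} gives the integrand $-g_{m,n+1} - \langle\partial_x\eta,\partial_y\eta\rangle f_{m+1,n} + |\partial_x\eta|^2 f_{m,n+1}$. For $n=0$ it is annihilated by $f_{m,0}|_{\partial M} = \partial_x^m f|_{\partial M}=0$, which gives the footnote. For $n\ge 1$ the decisive observation is that the equation $\Delta g = \diver(G_\eta\grad f)$ is equivalent to the divergence-free condition $\partial_x P + \partial_y Q = 0$, where $P = \partial_x g - |\partial_y\eta|^2\partial_x f + \langle\partial_x\eta,\partial_y\eta\rangle\partial_y f$ is precisely the quantity appearing in the proposition's boundary integrand and $Q = \partial_y g + \langle\partial_x\eta,\partial_y\eta\rangle\partial_x f - |\partial_x\eta|^2\partial_y f$. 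Differentiating this conservation law $m$ times in $x$ and $n-1$ times in $y$ gives $\partial_x^m\partial_y^n Q = -\partial_x^{m+1}\partial_y^{n-1} P$, so modulo the lower-order Leibniz corrections on the $G_\eta$-factors the boundary integrand equals $-Q_{m,n} = P_{m+1,n-1}$; one final integration by parts in $x$ on the closed curve $\partial M=S^1$ converts $\int_{\partial M} f_{m,n}\,P_{m+1,n-1}\,dx$ into $-\int_{\partial M} f_{m+1,n}\,\partial_x^m\partial_y^{n-1}P\,dx$, exactly the boundary term in the statement.

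I expect the main obstacle to be the careful bookkeeping of the lower-order commutator corrections, both from the interior term $\mathcal{C}[f]$ and from the Leibniz expansion of the $G_\eta$-factors that is being suppressed on the boundary. The interior ones are tamed by one further integration by parts as described above. The boundary corrections are more delicate and require trace-type inequalities such as $\|u|_{\partial M}\|_{L^2(\partial M)}^2 \lesssim \|u\|_{L^2(M)}\|\grad u\|_{L^2(M)} + \|u\|_{L^2(M)}^2$, combined with Young's inequality, so that they can be absorbed into the stated error $C\|\eta\|_{C^{m+n+1}}^2\|\grad f_{m,n}\|_{L^2}\|f\|_{\dot{H}^{m+n}}$. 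The clean form of the final inequality reflects the divergence-free structure of $(P,Q)$ built into the equation, which at leading order turns a messy boundary integrand into a single explicit term.
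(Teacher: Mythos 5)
Your overall strategy coincides with the paper's: the twisted Laplacian identity of Lemma \ref{twistedlaplacianlemma}, two applications of the divergence theorem, the lower bound $K_\eta\lVert\grad f_{m,n}\rVert_{L^2}^2$ from positive-definiteness of $G_\eta$ (with the same reciprocal-eigenvalue computation), a commutator estimate for the interior error, and the conservation-law identity \eqref{PDEboundary} followed by a single integration by parts in $x$ to produce the boundary term of \eqref{firstpositive}. All of the main ideas are present and in the right order.

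The one step that, as written, would not close is your treatment of the boundary integrand. You Leibniz-expand $\partial_x^m\partial_y^n\langle G_\eta\grad f,\nu\rangle$ first, keep only the leading part $-\langle\partial_x\eta,\partial_y\eta\rangle f_{m+1,n}+\lvert\partial_x\eta\rvert^2 f_{m,n+1}$, apply the conservation law $\partial_xP+\partial_yQ=0$ only to that part, and then propose to absorb the discarded Leibniz corrections by trace inequalities. Those corrections are boundary integrals of $f_{m,n}$ against $(\partial^\alpha G_\eta)(\partial^\gamma f)$ with $\lvert\alpha\rvert\ge1$ and $\lvert\gamma\rvert\le m+n$; converting them to interior integrals via $\int_{\partial M}uv\,dx=-\int_M\partial_y(uv)\,dA$ (which is what your trace inequality amounts to) necessarily places one more derivative on the second factor, producing top-order derivatives of $f$ in components \emph{other} than $\grad f_{m,n}$ (terms of size $\lVert f\rVert_{\dot{H}^{m+n}}\lVert f\rVert_{\dot{H}^{m+n+1}}$) as well as derivatives of $\eta$ of order $m+n+2$. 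Neither fits the stated error $C\lVert\eta\rVert^2_{C^{m+n+1}}\lVert\grad f_{m,n}\rVert_{L^2}\lVert f\rVert_{\dot{H}^{m+n}}$, and the precise component structure of the top-order terms is exactly what the later weighting argument in Proposition \ref{positivityprop} depends on. The repair is that there are no corrections at all: since $\partial_yQ=-\partial_xP$ holds for the full expressions, $\partial_x^m\partial_y^nQ=-\partial_x^{m+1}\partial_y^{n-1}P$ exactly, and one $x$-integration by parts yields the stated boundary term with no remainder. The proposition deliberately leaves that boundary term unestimated; the delicate estimation (where Lemma \ref{xderivativeboundarylemma} and integration-by-parts tricks genuinely enter) is deferred to Proposition \ref{boundaryestimate}.
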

\begin{proof}
We start with an integration by parts to obtain
\begin{equation}\label{firstintegration}
\begin{split}
\langle \grad f_{m,n}, \grad g_{m,n} \rangle_{L^2}
&=
\int_M \langle \grad f_{m,n}, \grad g_{m,n} \rangle\, dxdy
\\
&=
\int_M \diver{(f_{m,n} \grad g_{m,n})} \, dxdy
-
\int_M f_{m,n} \partial_x^m \partial_y^n \Laplacian g \, dxdy.
\end{split}
\end{equation}
Using Lemma \ref{twistedlaplacianlemma} we further rewrite the above as
\begin{equation}\label{secondintegration}
\begin{split}
&=
-\int_{\partial M} f_{m,n} g_{m,n+1} \, dx
-
\int_M f_{m,n}  \diver{(\partial_x^m \partial_y^n G_\eta\nabla f)} \, dxdy
\\
&=
\int_{\partial M} f_{m,n}\vert_{\partial M} \big(
-g_{m,n+1} - \partial_x^m \partial_y^n \langle G_\eta\nabla f, \nu \rangle\big)\vert_{\partial M} \, dx
+
\int_M \langle \grad f_{m,n}, \partial_x^m \partial_y^n G_\eta\nabla f \rangle \, dxdy,
\end{split}
\end{equation}
where we again used the divergence theorem and where the outward unit normal is $\nu=(0,-1)$.

\hskip 0.5cm
We proceed to analyze these terms separately. Observe that $G_\eta = (D\eta^{\transpose}D\eta)^{-1}$
is a positive-definite matrix and the last term can be written as
\begin{align*}
\langle \grad f_{m,n}, \partial_x^m \partial_y^n G_\eta\nabla f \rangle
&=
\int_M \langle \grad f_{m,n}, G_\eta\partial_x^m\partial_y^n \nabla f \rangle \, dxdy
+
\int_M \langle \grad f_{m,n}, [\partial_x^m\partial_y^n, G_\eta] \grad f \, dxdy
\\
&\ge
\int_M \lvert
(D\eta^{\transpose})^{-1} \grad f_{m,n}\rvert^2 \, dxdy
-
\lVert \grad f_{m,n}\rVert_{L^2} \big\lVert [\partial_x^m\partial_y^n, G_\eta] \grad f \big\rVert_{L^2}.
\end{align*}
Since $G_\eta$ is a matrix of smooth functions, the commutator with any differential operator of order $m+n$
is a differential operator of lower order with coefficients involving derivatives of $\eta$ up to order $m+n+1$
at most. Hence we have an estimate
$$
\big\lVert [\partial_x^m\partial_y^n, G_\eta]\grad f \big\rVert_{L^2}
\le
C\lVert \eta\rVert^2_{C^{m+n+1}} \lVert f\rVert_{\dot{H}^{m+n}(M)}
$$
with $\| \cdot\|_{\dot{H}^{m+n}}$ denoting the Sobolev $H^{m+n}$ norm that omits the lowest-order terms
(in other words, the $H^{m+n-1}$ norm of the gradient).
On the other hand we have
$$
\int_M \lvert (D\eta^{\transpose})^{-1} \grad f_{m,n} \rvert^2 \, dxdy
\ge
K_{\eta} \lVert \grad f_{m,n} \rVert^2_{L^2}
$$
where $K_{\eta}$ is the infimum over $M$ of the eigenvalues of
$G_\eta = (D\eta^{\transpose}D\eta)^{-1}$
and minimizing over all of $M$ gives the formula for $K_{\eta}$.

\hskip 0.5cm
Next, consider the boundary term in \eqref{secondintegration} given by
$$
\int_{\partial M} f_{m,n} \,
\partial_x^m\partial_y^n \big(
-\partial_y g + | \partial_x\eta |^2 \partial_y f - \langle \partial_x\eta,\partial_y\eta \rangle \partial_x f
\big) \, dx.
$$
Since $f\vert_{\partial M}=0$, we know that $f_{m,0}\vert_{\partial M}=0$ so that this term vanishes if $n=0$.
If $n\ge 1$ then we can use the equation $\Laplacian g = \diver{(G_{\eta} \nabla f)}$ to simplify
\begin{equation} \label{PDEboundary}
\partial_y (-\partial_y g + | \partial_x\eta |^2 \partial_y f -\langle\partial_x\eta,\partial_y\eta \rangle\partial_x f)
=
\partial_x (\partial_x g - | \partial_y\eta |^2 \partial_x f +\langle \partial_x\eta,\partial_y\eta \rangle\partial_y f)
\end{equation}
so that the boundary term becomes the last term of \eqref{firstpositive} after an integration by parts in $x$.
\end{proof}

Before we proceed further with the proof in full generality, let us illustrate the basic idea
with a simple explicit example.

\begin{ex} \upshape
Consider the diffeomorphism
$$ \eta(x,y) = (x+\phi(y), y),$$
for some function $\phi$ which decays as $y\to\infty$. This is a shear flow, and the function
$ \eta(t,x,y) = (x+t\phi(y),y)$ is a solution of the inviscid Euler equation with steady velocity field
$ u(x,y) = \phi(y) e_x$. The matrix $G_{\eta}$ is given by
$$ G_\eta = (D\eta^{\transpose}D\eta)^{-1} = \left( \begin{matrix} 1+\phi'(y)^2 & -\phi'(y) \\
-\phi'(y) & 1\end{matrix}\right).$$

Consider the $H^2$ norm on vector fields, corresponding to the $\dot{H}^3$ norm on stream functions.
(This is the first interesting case, as $H^1$ on vector fields has an accidental cancellation\footnote{See Lemma \ref{gboundlemma}.} and $L^2$
on vector fields is the weak case already discussed.) We now consider the three terms that together make
up the $\dot{H}^3$ inner product, and weight them with positive constants $B_i$: we get
\begin{equation}\label{H3norm}
\llangle f,g\rrangle_3 =
 B_0 \langle \nabla f_{yy}, \nabla g_{yy}\rangle_{L^2} + B_1 \langle \nabla f_{xy}, \nabla g_{xy}\rangle_{L^2} + B_2\langle \nabla f_{xx}, \nabla g_{xx}\rangle_{L^2}.
\end{equation}

By Proposition \ref{gradientinequalityprop}, the leading-order terms in \eqref{H3norm} are
\begin{align*}
\llangle f,g\rrangle_3 &= B_0 \lVert G_{\eta} \grad f_{yy}\rVert^2_{L^2} + B_1 \lVert G_{\eta}\grad f_{xy}\rVert^2_{L^2} + B_2 \lVert G_{\eta}\grad f_{xx}\rVert^2_{L^2} \\
&\qquad\qquad - \int_{S^1} B_0 f_{xyy}(x,0) \big(g_{xy}(x,0) - (1+\phi'(0)^2) f_{xy}(x,0) + \phi'(0) f_{yy}(x,0)) \\
&\qquad\qquad - \int_{S^1} B_1 f_{xxy}(x,0) \big(g_{xx}(x,0) - (1+\phi'(0)^2) f_{xx}(x,0) + \phi'(0) f_{xy}(x,0)) \\
&= B_0 \lVert G_{\eta} \grad f_{yy}\rVert^2_{L^2} + B_1 \lVert G_{\eta}\grad f_{xy}\rVert^2_{L^2} + B_2 \lVert G_{\eta}\grad f_{xx}\rVert^2_{L^2} \\
&\qquad\qquad - B_0\int_{S^1} f_{xyy}(x,0) g_{xy}(x,0) \, dx + B_0 (1+\phi'(0)^2) \int_{S^1} f_{xyy}(x,0) f_{xy}(x,0) \, dx,
\end{align*}
after integrating by parts.

Now we can estimate the second boundary term as follows:
\begin{align*}
\int_{S^1} f_{xyy}(x,0) f_{xy}(x,0) \, dx &= -\int_{M} \partial_y(f_{xyy} f_{xy}) \, dA
= -\int_M (f_{xyyy} f_{xy} + f_{xyy}^2) \, dA \\
&= \int_M (f_{yyy} f_{xxy} - f_{xyy}^2) \, dA \\
&\ge -\frac{\varepsilon}{2} \int_M f_{yyy}^2 \, dA - \frac{1}{2\varepsilon} \int_M f_{xxy}^2 \, dA - \int_M f_{xyy}^2 \, dA.
\end{align*}
Likewise we can estimate the first boundary term and combine with an upper bound on $g$ given by $\lVert \grad g_{xy} \rVert\le C_{\eta} \lVert \grad f_{xy}\rVert$ to get similar terms, ignoring lower-order terms.
We end up with
\begin{align*}
\llangle f,g \rrangle_3 &\ge B_0\big(K_{\eta} - \tfrac{\varepsilon}{2}\tilde{C}_{\eta}) \big)\lVert f_{yyy}\rVert^2
+ \big( K_{\eta} (B_0+B_1) - \tilde{C}_{\eta} B_0 \big) \lVert f_{xyy}\rVert^2 \\
&\qquad\qquad + \big( K_{\eta} (B_1+B_2) - \tfrac{1}{2\varepsilon} \tilde{C}_{\eta} B_0\big) \lVert f_{xxy}\rVert^2
+ K_{\eta} B_2 \lVert f_{xxx}\rVert^2,
\end{align*}
where $\tilde{C}_{\eta} = 1+\phi'(0)^2 + C_{\eta}$, ignoring all lower-order terms.

From this formula we see that if $\varepsilon$ is sufficiently small and $B_2\gg B_1\gg B_0>0$, then we can arrange
$$ \llangle f, g\rrangle_3 \ge c_{\eta} \lVert f\rVert^2_3$$
for some very small $c_{\eta}>0$, up to lower-order terms. This implies that the map $f\mapsto g=\Lambda f$ has closed range, which leads to invertibility. This is the main idea of the proof we give in the next section.
\end{ex}
%

\section{Proof of Theorem 1: Estimates at the Boundary}
\label{sec:Fred-B}

\hskip 0.5cm
We now need to estimate the boundary terms appearing in equation \eqref{firstpositive}.
The following lemma simplifies many of the calculations.
\begin{lemma} \label{xderivativeboundarylemma}
For any $H^1$ functions $f$ and $g$ on $M$ vanishing at infinity, we have
$$
\int_{\partial M} f \partial_x g \, dx \le \lVert \grad f\rVert_{L^2} \lVert \grad g\rVert_{L^2}.
$$
\end{lemma}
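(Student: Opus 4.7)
The approach is to convert the boundary integral into a symmetric volume integral over $M$ and then estimate it by a pointwise Cauchy--Schwarz bound on the gradients. I would argue first for smooth $f$ and $g$ vanishing at infinity, then extend to $H^1$ by density at the end.

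The first step is to apply the divergence theorem to the vector field $(0,f\partial_x g)$ on $M$. Since the outward unit normal to $\partial M = S^1\times\{0\}$ is $\nu=(0,-1)$ and all contributions at $y\to\infty$ vanish by the hypothesis on $f$ and $g$, this yields
\begin{equation*}
\int_{\partial M} f\,\partial_x g\, dx
=
-\int_M \partial_y\bigl(f\,\partial_x g\bigr)\, dxdy
=
-\int_M \partial_y f\cdot \partial_x g\, dxdy
- \int_M f\,\partial_x\partial_y g\, dxdy.
\end{equation*}
Because $x\in S^1$ contributes no boundary, I would then integrate by parts in $x$ in the last term to reach the symmetric identity
\begin{equation*}
\int_{\partial M} f\,\partial_x g\, dx
=
\int_M \bigl(\partial_x f\cdot \partial_y g - \partial_y f\cdot \partial_x g\bigr)\, dxdy.
\end{equation*}

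The integrand on the right is the Jacobian determinant $\det(\grad f,\grad g)$, equivalently the $2$D Poisson bracket $\{f,g\}$. By Hadamard's inequality (or by viewing it as the magnitude of the cross product of the gradients in the plane) it is bounded pointwise by $|\grad f|\,|\grad g|$. Combining this pointwise bound with Cauchy--Schwarz in $L^2$ gives
\begin{equation*}
\Bigl|\int_{\partial M} f\,\partial_x g\, dx\Bigr|
\le
\int_M |\grad f|\,|\grad g|\, dxdy
\le
\lVert \grad f\rVert_{L^2}\lVert \grad g\rVert_{L^2},
\end{equation*}
which is the stated inequality.

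No step presents a serious obstacle. The only mild subtlety is that for $g\in H^1$ alone the trace of $\partial_x g$ on $\partial M$ need not be defined pointwise, but the symmetric volume formula above shows that the left-hand side extends continuously to a bilinear form on $H^1\times H^1$ controlled by the gradient norms, so the inequality is valid in the sense of this continuous extension.
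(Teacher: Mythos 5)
Your proof is correct and follows essentially the same route as the paper: both convert the boundary integral into the volume integral of the Jacobian $\partial_x f\,\partial_y g-\partial_y f\,\partial_x g$ via the fundamental theorem of calculus in $y$ plus an integration by parts in the periodic $x$ variable, and then apply Cauchy--Schwarz (the paper phrases the last step as the $L^2$ pairing of the rotated gradient $(-\partial_y f,\partial_x f)$ with $\grad g$, which is the same bound as your pointwise Hadamard estimate).
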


\begin{proof}
A straightforward computation gives
\begin{align*}
\int_{\partial M} f(x,0) \partial_x g(x,0) \, dx
&=
-\int_{\partial M} \int_0^\infty \frac{\partial}{\partial y}\Big( f(x,y) \partial_x g(x,y) \Big) \, dydx
\\
&=
-\int_M \partial_y f \partial_x g \, dxdy + \int_M \partial_x f \partial_y g \, dxdy
\\
&= \Big\langle (-\partial_y f, \partial_x f), (\partial_xg, \partial_yg)\Big\rangle_{L^2} \le
\lVert \grad f \rVert_{L^2} \lVert \grad g \rVert_{L^2}.
\end{align*}
\end{proof}

Now we estimate the boundary terms in Proposition \ref{gradientinequalityprop} in terms of
norms on the entire space $M$.

\begin{proposition} \label{boundaryestimate}
Let $\eta \in \mathscr{D}_\mu(M)$.
If $f \in \mathcal{F}^{s+1}(M)$ and $g = \Lambda f$ then given any $m\ge 0$ and $n\ge 1$
the boundary terms in \eqref{firstpositive} can be estimated by
\begin{align}
\int_{\partial M} f_{m+1,n} g_{m+1,n-1} \, dx
&\le
\lVert \grad f_{m,n} \rVert_{L^2} \lVert \grad g_{m+1,n-1} \rVert_{L^2}
\qquad (\text{$n>1$}) \label{fgboundary} \\
\int_{\partial M} f_{m+1,n} \partial_x^m\partial_y^{n-1} \big( \lvert \partial_y\eta \rvert^2 \partial_x f \big) \,dx
&\le
\lVert \eta \rVert^2_{C^1} \lVert \grad f_{m,n} \rVert_{L^2} \lVert \grad f_{m+1,n-1} \rVert_{L^2}
\nonumber \\
&\qquad\qquad\quad
+
C \lVert \eta \rVert_{C^{m+n+1}}^2 \lVert \grad f_{m,n} \rVert_{L^2} \lVert f \rVert_{\dot{H}^{m+n}}
\label{ffxboundary} \\
\int_{\partial M}
f_{m+1,n} \partial_x^m \partial_y^{n-1}\big( \langle \partial_x\eta,\partial_y\eta \rangle \partial_y f \big)
\, dx
&\le
C \lVert \eta \rVert^2_{C^{m+n+1}} \lVert \grad f_{m,n} \rVert_{L^2} \lVert f \rVert_{\dot{H}^{m+n}}
\label{ffyboundary}
\end{align}
where $C>0$ is independent of $\eta$ and the $\dot{H}^{m+n}$ norm is defined in \eqref{dot-H}.
\end{proposition}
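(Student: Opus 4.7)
The strategy is to reduce each boundary integral to the form required by Lemma \ref{xderivativeboundarylemma}, using two tools: the identity $f_{m+1,n} = \partial_x f_{m,n}$ combined with an integration by parts in $x$ along $\partial M$ (to shift $\partial_x$ onto whichever factor is convenient), and the Leibniz rule to split $\partial_x^m \partial_y^{n-1}$ acting on a product into a principal term plus commutator remainders carrying at least one extra derivative on $\eta$ and at most $m+n-1$ derivatives on $f$. For \eqref{fgboundary}, this is immediate: write $g_{m+1,n-1} = \partial_x g_{m,n-1}$ and integrate by parts in $x$ along $\partial M$, then apply Lemma \ref{xderivativeboundarylemma} to the pair $(f_{m,n}, g_{m+1,n-1})$ in the resulting $-\int_{\partial M} f_{m,n}\,\partial_x g_{m+1,n-1}\,dx$.

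For \eqref{ffxboundary}, I expand $\partial_x^m \partial_y^{n-1}(|\partial_y\eta|^2 \partial_x f)$ by Leibniz. The principal term $|\partial_y\eta|^2 f_{m+1,n-1}$ produces $\int_{\partial M}(\partial_x f_{m,n})\,|\partial_y\eta|^2 f_{m+1,n-1}\,dx$; after an integration by parts in $x$, Lemma \ref{xderivativeboundarylemma} applies with $f = f_{m,n}$ and $g = |\partial_y\eta|^2 f_{m+1,n-1}$, and the product rule on $\grad g$ yields the $\lVert\eta\rVert_{C^1}^2 \lVert\grad f_{m,n}\rVert_{L^2}\lVert\grad f_{m+1,n-1}\rVert_{L^2}$ contribution plus a lower-order piece in which $\grad$ falls on $|\partial_y\eta|^2$, absorbed into the $\dot{H}^{m+n}$ term via $\lVert f_{m+1,n-1}\rVert_{L^2} \le \lVert f\rVert_{\dot{H}^{m+n}}$. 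The Leibniz remainders, each carrying at most $m+n-1$ derivatives on $f$ and one extra derivative on $\eta$, are handled analogously and all fit under $C\lVert\eta\rVert_{C^{m+n+1}}^2 \lVert\grad f_{m,n}\rVert_{L^2}\lVert f\rVert_{\dot{H}^{m+n}}$.

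For \eqref{ffyboundary}, the principal Leibniz term is $\langle\partial_x\eta,\partial_y\eta\rangle f_{m,n}$ since $\partial_y^{n-1}\partial_y f = \partial_y^n f$, and the key observation is the cancellation
\begin{equation*}
\int_{\partial M} f_{m+1,n}\,\langle\partial_x\eta,\partial_y\eta\rangle\, f_{m,n}\,dx
=
\tfrac{1}{2}\int_{\partial M}\langle\partial_x\eta,\partial_y\eta\rangle\,\partial_x(f_{m,n}^2)\,dx
=
-\tfrac{1}{2}\int_{\partial M} f_{m,n}^2\,\partial_x\langle\partial_x\eta,\partial_y\eta\rangle\,dx,
\end{equation*}
so that the principal contribution drops one order in $f$. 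The trace-type identity $\int_{\partial M} f_{m,n}^2\,dx = -2\int_M f_{m,n}\,\partial_y f_{m,n}\,dxdy$ (from the fundamental theorem of calculus, exactly as in the proof of Lemma \ref{xderivativeboundarylemma}), Cauchy-Schwarz, and $\lVert f_{m,n}\rVert_{L^2(M)} \le \lVert f\rVert_{\dot{H}^{m+n}}$ (valid for $n \ge 1$ by \eqref{dot-H}) then deliver the $\dot{H}^{m+n}$ bound; the Leibniz remainders are treated as in \eqref{ffxboundary}. This cancellation is precisely why no $\lVert\grad f_{m+1,n-1}\rVert_{L^2}$ term appears on the right-hand side of \eqref{ffyboundary}. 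The main obstacle in the full argument is the careful enumeration of all Leibniz remainders and verification that each fits, after a single application of Lemma \ref{xderivativeboundarylemma} together with the product rule, under the lower-order $C\lVert\eta\rVert_{C^{m+n+1}}^2 \lVert\grad f_{m,n}\rVert_{L^2}\lVert f\rVert_{\dot{H}^{m+n}}$ contribution; once the cancellation in \eqref{ffyboundary} has been identified, the remainder is routine but index-heavy.
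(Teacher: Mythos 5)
Your proof is correct and follows essentially the same route as the paper's: Lemma \ref{xderivativeboundarylemma} combined with the Leibniz rule for \eqref{fgboundary} and \eqref{ffxboundary}, and the integration by parts $\int_{\partial M} f_{m+1,n}\langle\partial_x\eta,\partial_y\eta\rangle f_{m,n}\,dx = -\tfrac{1}{2}\int_{\partial M} f_{m,n}^2\,\partial_x\langle\partial_x\eta,\partial_y\eta\rangle\,dx$ as the key cancellation for \eqref{ffyboundary}. The only differences are cosmetic (you apply Leibniz before rather than after Lemma \ref{xderivativeboundarylemma}, and you invoke the trace identity explicitly where the paper cites the lemma), so no further comment is needed.
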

\begin{proof}
Inequality \eqref{fgboundary} follows at once from Lemma \ref{xderivativeboundarylemma}.
To estimate \eqref{ffxboundary} we use Lemma \ref{xderivativeboundarylemma} and the Leibniz rule to get
\begin{align*}
&\int_{\partial M} f_{m+1,n} \partial_x^m\partial_y^{n-1} \big( \lvert \partial_y \eta \rvert^2 \partial_x f \big) \,dx
\le \lVert \grad f_{m,n}\rVert_{L^2} \lVert \grad \partial_x^m\partial_y^{n-1}\big( \lvert \partial_y\eta\rvert^2 \partial_x f\big)\rVert_{L^2}\\
&\qquad\qquad \le
\lVert \eta \rVert^2_{C^1} \lVert \grad f_{m+1,n-1} \rVert_{L^2} \|\nabla f_{m,n}\|_{L^2}
+
C \lVert \eta \rVert^2_{C^{m+n+1}} \|f\|_{\dot{H}^{m+n}} \lVert \grad f_{m,n} \rVert_{L^2}
\end{align*}
%

\hskip 0.5cm
For \eqref{ffyboundary} we use a trick on the highest-order term to improve over
Lemma \ref{xderivativeboundarylemma}:
\begin{align*}
\int_{\partial M}
f_{m+1,n} \partial_x^m \partial_y^{n-1} \big( \langle \partial_x\eta,\partial_y\eta \rangle \partial_y f \big)
\, dx
&\le
\int_{\partial M}
f_{m+1,n} \langle \partial_x\eta,\partial_y\eta \rangle f_{m,n} \, dx
\\
&\qquad\qquad\qquad +
C \|\eta\|_{C^{m+n+1}}^2 \|f\|_{\dot{H}^{m+n}} \|\nabla f_{m,n}\|_{L^2}
\end{align*}
%
Now integrating the first term on the right by parts, we obtain (using Lemma \ref{xderivativeboundarylemma})
\begin{align*}
\int_{\partial M} f_{m+1,n} \langle \partial_x\eta, \partial_y\eta\rangle f_{m,n} \, dx
&= \frac{1}{2} \int_{\partial M} \langle \partial_x\eta, \partial_y\eta\rangle \, \partial_x(f_{m,n}^2) \, dx = -\frac{1}{2} \int_{\partial M} \partial_x\langle \partial_x\eta, \partial_y\eta\rangle f_{m,n}^2\,dx \\
&\lesssim \lVert \eta\rVert_{C^2} \lVert f_{m,n}\rVert_{L^2} \lVert \grad f_{m,n}\rVert_{L^2} \lesssim \lVert \eta\rVert_{C^2} \lVert f\rVert_{\dot{H}^{m+n}} \lVert \grad f_{m,n}\rVert_{L^2},
\end{align*}
and thus this term folds into our previous term.
%
\end{proof}

\hskip 0.5cm
It remains to estimate the term $\lVert \grad g_{m+1,n-1}\rVert_{L^2}$ in terms of
a suitable norm of $f$.
\begin{lemma} \label{gboundlemma}
Let $\eta$, $f$ and $g = \Lambda f$ be as in Proposition \ref{boundaryestimate}.
For any integers $m \ge 1$ and $n > 1$ we have
\begin{multline} \label{gbound}
\lVert \grad g_{m,n} \rVert_{L^2}
\le
\lVert \grad g_{m+1,n-1} \rVert_{L^2} + \lVert \eta \rVert^2_{C^1} \lVert \grad f_{m+1,n-1} \rVert_{L^2}
+
\lVert \eta \rVert^2_{C^1} \lVert \grad f_{m,n} \rVert_{L^2} \\
+ C \lVert \eta\rVert^2_{C^{m+n+1}} \lVert f \rVert_{\dot{H}^{m+n}}
\end{multline}
while for $n=0$ or $n=1$ we have
\begin{equation} \label{gboundxonly}
\lVert \grad g_{m,n} \rVert_{L^2}
\le
\lVert \eta \rVert^2_{C^1} \lVert \grad f_{m,n} \rVert_{L^2} + C \lVert \eta\rVert^2_{C^{m+n+1}} \lVert f \rVert_{\dot{H}^{m+n}}
\end{equation}
where $C$ is a constant depending on $m$ and $n$ but not on $\eta$.
\end{lemma}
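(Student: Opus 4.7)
The plan is to exploit the relation $\Delta g = \diver(G_\eta \grad f)$ arising from $g = \Lambda f$ and Lemma \ref{twistedlaplacianlemma}, trading $y$-derivatives of $g$ for $x$-derivatives plus terms involving $f$. The three cases $n \ge 2$, $n = 0$, and $n = 1$ must be handled separately because $g$ and its pure $x$-derivatives vanish on $\partial M$ while higher $y$-derivatives of $g$ do not.

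For $n \ge 2$, applying $\partial_x^m \partial_y^{n-1}$ to $g_{yy} = \diver(G_\eta \grad f) - g_{xx}$ gives $g_{m, n+1} = -g_{m+2, n-1} + \partial_x^m \partial_y^{n-1}\diver(G_\eta \grad f)$. Since $\grad g_{m, n} = (g_{m+1, n}, g_{m, n+1})$ splits in $L^2 \oplus L^2$ as $(g_{m+1, n}, -g_{m+2, n-1}) + (0, \partial_x^m \partial_y^{n-1} \diver(G_\eta \grad f))$, the triangle inequality yields
\begin{equation*}
\lVert \grad g_{m, n}\rVert_{L^2} \le \lVert \grad g_{m+1, n-1}\rVert_{L^2} + \lVert \partial_x^m \partial_y^{n-1} \diver(G_\eta \grad f)\rVert_{L^2}.
\end{equation*}
Expanding the divergence and isolating its highest-order part $\sum_{i,j} G_\eta^{ij} \partial_i \partial_j f_{m, n-1}$, this leading piece is bounded by $\lVert \eta \rVert_{C^1}^2 \bigl(\lVert \grad f_{m+1, n-1}\rVert_{L^2} + \lVert \grad f_{m, n}\rVert_{L^2}\bigr)$, while the commutator and non-leading contributions are absorbed into $C\lVert \eta\rVert_{C^{m+n+1}}^2 \lVert f\rVert_{\dot{H}^{m+n}}$ by the Leibniz argument used in Proposition \ref{gradientinequalityprop}. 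This establishes \eqref{gbound}.

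For $n = 0$, the Dirichlet condition $g\vert_{\partial M} = 0$ forces $g_{m, 0}\vert_{\partial M} = 0$, so two integrations by parts with vanishing boundary terms give $\lVert \grad g_{m, 0}\rVert_{L^2}^2 = \int_M \grad g_{m, 0} \cdot \partial_x^m(G_\eta \grad f)\,dxdy$, and Cauchy--Schwarz combined with the same leading/commutator split yields \eqref{gboundxonly} for $n = 0$.

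The delicate case, which is the main obstacle, is $n = 1$: a naive reduction to $n = 0$ or via the $n \ge 2$ recursion forces an unwanted $\lVert \grad f_{m+1, 0}\rVert_{L^2}$ on the right-hand side. The trick is to derive the identity
\begin{equation*}
\lVert \grad g_{m, 1}\rVert_{L^2}^2 = \int_M g_{m, 2}\, \partial_x^m \diver(G_\eta \grad f)\, dxdy,
\end{equation*}
which follows by substituting $g_{m, 2} = \partial_x^m \diver(G_\eta \grad f) - g_{m+2, 0}$ and verifying that $\int_M g_{m, 2}\, g_{m+2, 0}\, dxdy = \lVert g_{m+1, 1}\rVert_{L^2}^2$ after integrating by parts once in $y$ and once in $x$; the boundary term $\int_{\partial M} g_{m, 1}\,g_{m+2, 0}\,dx$ vanishes because $g_{m+2, 0}\vert_{\partial M} = 0$. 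To then estimate the right-hand side without producing $\lVert f_{m+2, 0}\rVert_{L^2}$, I would treat its problematic leading contribution $\int_M g_{m, 2}\,\lvert \partial_y \eta\rvert^2\, f_{m+2, 0}\, dxdy$ by integrating once in $x$ (no boundary on $S^1$) and once in $y$: the $y$-boundary term vanishes because $f_{m+1, 0}\vert_{\partial M} = 0$, converting the expression into one of the form $-\int_M g_{m+1, 1}\,\lvert \partial_y \eta\rvert^2\, f_{m+1, 1}\,dxdy$ plus lower-order terms, which is controlled by $\lVert \grad g_{m, 1}\rVert_{L^2}\,\lVert \eta \rVert_{C^1}^2\, \lVert \grad f_{m, 1}\rVert_{L^2}$. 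This ``accidental cancellation'' coming from the Dirichlet data $f\vert_{\partial M} = 0$ is precisely what delivers \eqref{gboundxonly} for $n = 1$.
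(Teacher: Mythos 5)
Your proof is correct, but it takes a genuinely different route from the paper's in the two nontrivial cases. For $n\ge 2$ the paper argues integrally: it writes $\lVert\grad g_{m,n}\rVert_{L^2}^2 = -\int_M g_{m,n}\Delta g_{m,n}\,dxdy + \text{(boundary)}$, uses the identity \eqref{PDEboundary} to turn the boundary integrand into a total $x$-derivative, integrates by parts along $\partial M$, and controls each resulting boundary pairing with Lemma \ref{xderivativeboundarylemma}; every term then carries a factor $\lVert\grad g_{m,n}\rVert_{L^2}$ which is divided out. Your pointwise substitution $g_{m,n+1}=-g_{m+2,n-1}+\partial_x^m\partial_y^{n-1}\diver(G_\eta\grad f)$, together with the observation that $(g_{m+1,n},-g_{m+2,n-1})$ has the same $L^2$ norm as $\grad g_{m+1,n-1}$, gives \eqref{gbound} by the triangle inequality alone, with no integration by parts and no boundary terms at all --- cleaner, and it dispenses with Lemma \ref{xderivativeboundarylemma} for this case. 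For $n=1$ the paper again works on the boundary, observing that two of the three boundary integrals vanish because $g$ and $f$ vanish on $\partial M$; your interior identity $\lVert\grad g_{m,1}\rVert^2_{L^2}=\int_M g_{m,2}\,\partial_x^m\diver(G_\eta\grad f)\,dxdy$ checks out (the cross term integrates to $+\lVert g_{m+1,1}\rVert_{L^2}^2$ precisely because $g_{m+2,0}\vert_{\partial M}=0$), and the subsequent interior integration by parts on the single bad term $\lvert\partial_y\eta\rvert^2 f_{m+2,0}$, using $f_{m+1,0}\vert_{\partial M}=0$ to kill the boundary contribution, achieves the same cancellation the paper gets from its vanishing boundary integrals. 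The $n=0$ case is identical to the paper's. The only discrepancy is in absolute constants (e.g.\ splitting the cross term $2G_\eta^{xy}f_{m+1,n}$ between $\grad f_{m+1,n-1}$ and $\grad f_{m,n}$ costs a factor of order $\sqrt2$ over the stated coefficient $\lVert\eta\rVert_{C^1}^2$), but this is harmless for the application in Proposition \ref{positivityprop}: there the only coefficient that must not be inflated is the one multiplying $\lVert\grad f_{m,n}\rVert^2_{L^2}$ itself, and neither argument produces such a term.
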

\begin{proof}
Integrating by parts as in \eqref{firstintegration}, we have
\begin{equation} \label{gintegration}
\begin{split}
\lVert \grad g_{m,n}\rVert^2_{L^2}
&=
\int_M \diver{ (g_{m,n} \grad g_{m,n} )} \, dxdy - \int_M g_{m,n} \Delta g_{m,n} \, dxdy
\\
&=
\int_{\partial M}g_{m,n} \partial_x^m \partial_y^n (-\partial_y g - \langle G_\eta\nabla f, \nu \rangle ) \, dx
+
\langle \grad g_{m,n}, \partial_x^m\partial_y^n G_\eta\nabla f \rangle_{L^2}.
\end{split}
\end{equation}

\hskip 0.5cm
We first consider the case when $m \geq 1$ and $n>1$.
Since
$\langle G_\eta\nabla f, \nu \rangle
=
\langle\partial_x\eta,\partial_y\eta\rangle \partial_x f
-
|\partial_x\eta|^2 \partial_y f$
from \eqref{PDEboundary} we get
$$
\partial_y (-\partial_y g - \langle G_\eta \nabla f, \nu \rangle )
=
\partial_x (\partial_x g - |\partial_y\eta|^2 \partial_x f + \langle\partial_x\eta,\partial_y\eta\rangle\partial_y f )
$$
Using this identity and integrating by parts in $x$ the right hand side of \eqref{gintegration} becomes
\begin{align} \nonumber
&=
-\int_{\partial M} g_{m+1,n} \partial_x^m\partial_y^{n-1} (
\partial_x g - |\partial_y\eta|^2 \partial_x f + \langle\partial_x\eta,\partial_y\eta\rangle\partial_y f
) \, dx
+
\langle\nabla g_{m,n}, \partial_x^m\partial_y^n G_\eta \nabla f \rangle_{L^2}
\\ \nonumber
&=
-\int_{\partial M} g_{m+1,n} g_{m+1,n-1} dx
+
\int_{\partial M} |\partial_y\eta|^2 g_{m+1,n} f_{m+1,n-1} dx
-
\int_{\partial M} \langle\partial_x\eta,\partial_y\eta\rangle g_{m+1,n} f_{m,n} dx
\\ \nonumber
&\qquad\qquad\qquad\qquad\qquad\qquad\; +
\sum_{0<k+l<m+n} \int_{\partial M} \alpha_{kl} g_{m+1,n} f_{k,l} dx
+
\langle \nabla g_{m,n}, \partial_x^m\partial_y^n G_\eta\nabla f \rangle_{L^2}
\\ \nonumber
&= I + II + III + IV + V
\end{align}
where $\alpha_{ij}$ are functions depending on the derivatives up to order $m+n-1$
of $|\partial_y\eta|^2$ and $\langle\partial_x\eta,\partial_y\eta\rangle$ and the binomial coefficients.
Using Lemma \ref{xderivativeboundarylemma} and \eqref{dot-H} we have
\begin{align} \label{eq:123}
|I +II+III|
&\le
\|\nabla g_{m+1,n-1}\|_{L^2} \|\nabla g_{m,n}\|_{L^2}
+
\| \nabla g_{m,n}\|_{L^2} \| \nabla ( |\partial_y\eta|^2 f_{m+1,n-1}) \|_{L^2}
\\ \nonumber
&\hskip 5.9cm +
\|\nabla g_{m,n}\|_{L^2} \|\nabla (\langle\partial_x\eta,\partial_y\eta\rangle f_{m,n} ) \|_{L^2}
\\ \nonumber
&\le
\big(
\|\nabla g_{m+1,n-1}\|_{L^2}
+
\|\eta\|_{C^1}^2 \|\nabla f_{m+1,n-1}\|_{L^2} + \|\eta\|_{C^1}^2 \|\nabla f_{m,n}\|_{L^2}
\\ \nonumber
&\hskip 5.9cm
+
\|\eta\|_{C^2}^2 \|f\|_{\dot{H}^{m+n}}
\big) \|\nabla g_{m,n}\|_{L^2}
\end{align}
and similarly
\begin{equation} \label{eq:4}
|IV|
\lesssim
\sum_{0<k+l<m+n} \|\nabla g_{m,n} \|_{L^2} \| \nabla (\alpha_{kl} f_{k,l} )\|_{L^2}
\lesssim
\|\eta\|_{C^{m+n+1}}^2 \|\nabla g_{m,n}\|_{L^2} \| f \|_{\dot{H}^{m+n}}.
\end{equation}
Using the Cauchy-Schwarz inequality and the Leibniz rule
\begin{align} \label{eq:5}
|V|
&\le
\| \nabla g_{m,n} \|_{L^2} \| \partial_x^m\partial_y^n G_\eta \nabla f \|_{L^2}
\\ \nonumber
&\le
\| \eta \|_{C^1}^2 \| \nabla g_{m,n} \|_{L^2}  \| \nabla f_{m,n} \|_{L^2} +
C \| \nabla g_{m,n} \|_{L^2} \| \eta\|_{C^{m+n+1}}^2 \| f \|_{\dot{H}^{m+n}}.
\end{align}

\hskip 0.5cm
Combining \eqref{eq:123}, \eqref{eq:4} and \eqref{eq:5} we obtain \eqref{gbound},
as desired.

\hskip 0.5cm
Next, if $m \geq 1$ and $n=0$ then the boundary term in \eqref{gintegration} vanishes
since $g\vert_{\partial M} = 0$ and we have
\begin{align} \label{eq:n=0}
\| \nabla g_{m,0} \|_{L^2}^2
&\le
\| \nabla g_{m,0} \|_{L^2} \| \partial_x^m G_\eta \nabla f\|_{L^2}
\\ \nonumber
&\le
\lVert \eta\rVert_{C^1}^2 \lVert \grad g_{m,0}\rVert_{L^2} \lVert \grad f_{m,0}\rVert_{L^2} +
C\| \nabla g_{m,0}\|_{L^2} \| \eta \|_{C^{m+1}}^2 \| f \|_{\dot{H}^m}.
\end{align}

\hskip 0.5cm
Finally, if $m \geq 1$ and $n=1$ we use a trick to do a little better than \eqref{gbound}. Integrating by parts in \eqref{gintegration}
as before we have
\begin{align} \nonumber
\| \nabla g_{m,1} \|_{L^2}^2
&=
-\int_{\partial M} g_{m+1,1} \partial_x^{m+1} g \, dx
+
\int_{\partial M} g_{m+1,1} \partial_x^m (|\partial_y \eta|^2 \partial_x f ) \, dx
\\ \nonumber
&\qquad\qquad\quad -
\int_{\partial M} g_{m+1,1} \partial_x^m ( \langle\partial_x\eta, \partial_y\eta\rangle \partial_y f ) \, dx
+
\langle \nabla g_{m,1}, \partial_x^m \partial_y (G_\eta \nabla f) \rangle_{L^2}
\end{align}
The first two terms on the right hand side drop out since $g\vert_{\partial M} = f\vert_{\partial M} = 0$.
The remaining terms can be estimated using Lemma \ref{xderivativeboundarylemma} and
the Cauchy-Schwarz inequality as before
to get
\begin{equation} \label{eq:n=1}
\| \nabla g_{m,1} \|_{L^2}^2 \le \|\eta\|_{C^1}^2 \|\nabla g_{m,1}\|_{L^2}  \|\nabla f_{m,1}\|_{L^2}
+
C\| \nabla g_{m,1}\|_{L^2} \|\eta\|_{C^{m+2}} \| f\|_{\dot{H}^{m+1}}
\end{equation}
where we used the homogeneous norm \eqref{xderivativeboundarylemma}.
\end{proof}

\hskip 0.5cm
Our next task is to eliminate all $g$-terms on the right side of the basic inequality \eqref{gbound}.
\begin{proposition} \label{ggoneprop}
Let $\eta$, $f$ and $g=\Lambda f$ be as above.
For any $m \geq 1$ and $n \geq 0$ we have
\begin{equation} \label{ggone}
\lVert \grad g_{m,n} \rVert_{L^2}
\le \lVert \eta\rVert_{C^1}^2 \lVert \grad f_{m,n}\rVert_{L^2}
+ 2 \| \eta \|_{C^1}^2 \sum_{k=1}^{n-1} \lVert \grad f_{m+n-k,k} \rVert_{L^2}
+
C \| \eta \|_{C^{m+n+1}}^2 \lVert f \rVert_{\dot{H}^{m+n}}
\end{equation}
for some constant $C$ independent of $\eta$.
\end{proposition}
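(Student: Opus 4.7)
The estimate has exactly the form of a recursion relation, so the natural plan is induction on $n$, with $m$ allowed to range freely. At each step of the induction the recursion $\|\nabla g_{m,n}\|\mapsto\|\nabla g_{m+1,n-1}\|$ trades one unit of $y$-regularity for one unit of $x$-regularity, and the total order $m+n$ is preserved, so the lower-order error norms $\|f\|_{\dot H^{m+n}}$ and the weights $\|\eta\|_{C^{m+n+1}}$ are stable along the induction.

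\emph{Base cases.} When $n=0$ or $n=1$ the sum $\sum_{k=1}^{n-1}$ is empty, so the desired inequality \eqref{ggone} reduces to exactly the estimate \eqref{gboundxonly} already supplied by Lemma \ref{gboundlemma}.

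\emph{Inductive step.} Fix $n\ge 2$ and assume the proposition holds for the pair $(m',n')$ whenever $n'<n$. Apply \eqref{gbound} at $(m,n)$ to bound $\|\nabla g_{m,n}\|_{L^2}$ by $\|\nabla g_{m+1,n-1}\|_{L^2}$ plus $f$-terms. Since $n-1\ge 1$, we may apply the inductive hypothesis at the pair $(m+1,n-1)$ to estimate
\[
\|\nabla g_{m+1,n-1}\|_{L^2}
\le
\|\eta\|_{C^1}^2\|\nabla f_{m+1,n-1}\|_{L^2}
+
2\|\eta\|_{C^1}^2\sum_{k=1}^{n-2}\|\nabla f_{m+n-k,k}\|_{L^2}
+
C\|\eta\|_{C^{m+n+1}}^2\|f\|_{\dot H^{m+n}},
\]
noting that for the shifted pair the sum ranges over $k=1,\dots,(n-1)-1=n-2$ and $(m+1)+(n-1)-k=m+n-k$. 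Substituting and collecting like terms, the contributions to $\|\nabla f_{m+1,n-1}\|_{L^2}$ from the two invocations add up to $2\|\eta\|_{C^1}^2$, which is precisely the $k=n-1$ term of the sum $\sum_{k=1}^{n-1}\|\nabla f_{m+n-k,k}\|_{L^2}$ that appears in \eqref{ggone}. Absorbing this into the sum and keeping the $\|\nabla f_{m,n}\|_{L^2}$ term with its coefficient $\|\eta\|_{C^1}^2$ yields the claimed inequality.

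There is no serious obstacle here: the Lemma \ref{gboundlemma} estimates have been set up precisely so that this reindexing works, and the only thing to verify is the bookkeeping of the sum's upper limit and the coefficient $2$ of each summand. The one place to be careful is that the inductive hypothesis requires $m'\ge 1$; this is automatic since at each step we replace $m$ by $m+1\ge 2$, so the required $m\ge 1$ hypothesis is preserved.
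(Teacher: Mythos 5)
Your proof is correct and is essentially the argument in the paper: the paper simply unrolls your induction into an explicit telescoping sum $\lVert \grad g_{m,n}\rVert_{L^2} = \lVert \grad g_{m+n-1,1}\rVert_{L^2} + \sum_{k=1}^{n-1}\big(\lVert \grad g_{m+n-k-1,k+1}\rVert_{L^2} - \lVert \grad g_{m+n-k,k}\rVert_{L^2}\big)$, bounding each difference by \eqref{gbound} and the base term by \eqref{gboundxonly}. Your bookkeeping of the coefficient $2$ and of the sum's range matches the paper's.
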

\begin{proof}
Adding and subtracting terms and using inequalities \eqref{gbound} and \eqref{gboundxonly}
we have
\begin{align*}
\lVert \grad g_{m,n} \rVert_{L^2}
&=
\lVert \nabla g_{m+n-1,1} \rVert_{L^2}
+
\sum_{k=1}^{n-1} \Big( \lVert \grad g_{m+n-k-1, k+1} \rVert_{L^2} - \lVert \grad g_{m+n-k,k} \rVert_{L^2} \Big)
\\
&\le \| \eta\|_{C^1}^2 \lVert \grad f_{m+n-1,1}\rVert_{L^2} + \lVert \eta\rVert_{C^1}^2 \sum_{k=1}^{n-1} \Big( \lVert \grad f_{m+n-k-1,k+1}\rVert_{L^2} + \lVert \grad f_{m+n-k,k}\rVert_{L^2} \Big) \\
&\qquad\qquad + C \lVert \eta\rVert^2_{C^{m+n+1}} \lVert f\rVert_{\dot{H}^{m+n}}, 
\end{align*}
and \eqref{ggone} follows.
%
\end{proof}

\hskip 0.5cm
Given an integer $s \geq 0$ and any numbers $B_0, \dots B_s$ define a semi-inner product
on the space of stream functions on $M$ by
\begin{equation} \label{Sobolevdoubledot}
\llangle f,g \rrangle_{s+1}
=
\sum_{j=0}^s B_j \langle \partial_x^j \partial_y^{s-j} \grad f, \partial_x^j \partial_y^{s-j} \grad g \rangle_{L^2}
\end{equation}
and the associated seminorm by $\| f \|_{s+1} = \llangle f, f \rrangle_{s+1}^{1/2}$.
\begin{proposition} \label{positivityprop}
Let $\eta(t)$ be a smooth curve in $\Diffmu(M)$.
Let $f$ be a smooth function with $f\vert_{\partial M}=0$ and let $g(t) = \Lambda_t f$.
Given $s \ge 1$ there exist positive coefficients $B_0, \dots B_s$ depending on $\eta$ but independent of $t$
such that for sufficiently small $\epsilon > 0$ we have
\begin{equation} \label{hereitfinallyis}
\llangle f,g \rrangle_{s+1}
\ge
K \lVert f \rVert^2_{s+1} - C \lVert f \rVert_{s+1} \lVert f \rVert_{\dot{H}^s}
\end{equation}
where $K>0$ and $C>0$ are constants depending on $\epsilon$ and $s$; in addition $K$ depends on the $L^{\infty}C^1_x$-norm and $C$ depends on the $L^\infty_t C^{s+1}_x$-norm\footnote{That is,
$\|\varphi\|_{L^\infty_t C^k_x} = \sup\limits_{0\leq \tau \leq t} \| \varphi(\tau) \|_{C^k}$.}
of $\eta$.
\end{proposition}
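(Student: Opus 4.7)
The plan is to expand $\llangle f, g\rrangle_{s+1}$ with $g = \Lambda_t f$ using Proposition~\ref{gradientinequalityprop} on each summand, dispatch the three families of boundary integrals using Proposition~\ref{boundaryestimate}, eliminate the residual $\|\nabla g\|$-factors using Proposition~\ref{ggoneprop}, and finally choose the weights $B_j$ as a rapidly growing geometric sequence $B_j = \lambda^j$ with $\lambda$ large, exactly as suggested by the shear-flow example.

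Concretely, for each $0 \le j \le s$ Proposition~\ref{gradientinequalityprop} with $(m,n) = (j, s-j)$ yields
$$
\langle \nabla f_{j, s-j}, \nabla g_{j, s-j}\rangle_{L^2} \ge K_\eta \|\nabla f_{j, s-j}\|_{L^2}^2 - (\text{boundary})_j - (\text{l.o.t.})_j,
$$
where $(\text{l.o.t.})_j \lesssim \|\eta\|_{C^{s+1}}^2 \|\nabla f_{j, s-j}\|_{L^2}\|f\|_{\dot{H}^s}$ and the boundary term is absent when $j = s$. Proposition~\ref{boundaryestimate} bounds its three pieces by $\|\nabla f_{j, s-j}\|_{L^2}\|\nabla g_{j+1, s-j-1}\|_{L^2}$, by $\|\eta\|_{C^1}^2 \|\nabla f_{j, s-j}\|_{L^2}\|\nabla f_{j+1, s-j-1}\|_{L^2}$, and by a further lower-order piece. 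Proposition~\ref{ggoneprop} then replaces $\|\nabla g_{j+1, s-j-1}\|_{L^2}$ by $\|\eta\|_{C^1}^2\|\nabla f_{j+1, s-j-1}\|_{L^2} + 2\|\eta\|_{C^1}^2 \sum_{k=1}^{s-j-2}\|\nabla f_{s-k, k}\|_{L^2}$ plus lower-order terms. The decisive structural feature is that every cross-term couples $(j, s-j)$ only to indices $(j', s-j')$ with strictly more $x$-derivatives, i.e.\ $j' > j$, which is what makes a descending hierarchy of weights feasible.

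Summing the $j$-th inequality with weight $B_j$ yields a bound of the form
$$
\llangle f, g\rrangle_{s+1} \ge K_\eta \sum_{j=0}^s B_j \|\nabla f_{j, s-j}\|_{L^2}^2 - A_\eta \sum_{j < j'} B_j \|\nabla f_{j, s-j}\|_{L^2}\|\nabla f_{j', s-j'}\|_{L^2} - C\|\eta\|_{C^{s+1}}^2 \|f\|_{s+1}\|f\|_{\dot{H}^s},
$$
with $K_\eta$ and $A_\eta$ depending only on $\|\eta\|_{C^1}$. Applying Young's inequality with parameter $\varepsilon$ splits each cross-term as $\tfrac{\varepsilon B_j}{2}\|\nabla f_{j, s-j}\|_{L^2}^2 + \tfrac{B_j}{2\varepsilon}\|\nabla f_{j', s-j'}\|_{L^2}^2$. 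With the geometric choice $B_j = \lambda^j$, the net coefficient of $\|\nabla f_{j', s-j'}\|_{L^2}^2$ is at least $K_\eta \lambda^{j'}(1 - O(\varepsilon)) - \tfrac{A_\eta}{2\varepsilon}\sum_{j<j'}\lambda^j \ge K_\eta \lambda^{j'}(1 - O(\varepsilon)) - \tfrac{A_\eta}{2\varepsilon}\cdot\tfrac{\lambda^{j'}}{\lambda - 1}$, which is bounded below by $\tfrac{1}{2}K_\eta B_{j'}$ once $\varepsilon$ is fixed small and $\lambda$ is fixed large relative to $A_\eta/(\varepsilon K_\eta)$. This produces \eqref{hereitfinallyis} with $K = K_\eta/2$ and some $C$ depending on $\|\eta\|_{L^\infty_t C^{s+1}}$. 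The main obstacle is precisely this final calibration: since the weights $B_j$ must be independent of $t$, $\lambda$ and $\varepsilon$ have to be chosen in terms of $\sup_\tau \|\eta(\tau)\|_{C^1}$ so that the aggregated off-diagonal losses at every level $j'$ are uniformly dominated by the diagonal gain $K_\eta B_{j'}$; this geometric book-keeping, rather than any single hard inequality, is the only delicate point.
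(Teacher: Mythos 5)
Your proposal is correct and follows essentially the same route as the paper: Proposition \ref{gradientinequalityprop} on each summand, Proposition \ref{boundaryestimate} for the boundary integrals, Proposition \ref{ggoneprop} to eliminate the $g$-factors, Young's inequality to split the cross-terms, and then exploitation of the triangular coupling structure (each index couples only to indices with more $x$-derivatives) via rapidly growing weights. The paper defines the $B_k$ through the recurrence $B_k = (2Q_\epsilon/K_\epsilon)\sum_{j<k}B_j$, whose solution is a geometric sequence just like your $\lambda^j$, so the two calibrations are the same argument in different clothing.
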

\begin{proof}
From Proposition \ref{gradientinequalityprop} for any $t\geq 0$ and any integers $m \geq 0$ and $n\geq 0$
with $m+n=s$ we have
\begin{align*}
\llangle \grad f_{mn}, \grad g_{mn}(t)\rrangle_{L^2}
&\ge
K_{\eta}  \lVert \grad f_{m,n} \rVert^2_{L^2}
-
C \lVert \eta \rVert^2_{L^\infty_t C^{m+n+1}} \lVert \grad f_{m,n} \rVert_{L^2} \lVert f \rVert_{\dot{H}^{m+n}}
\\
&\qquad\qquad -
\int_{\partial M} f_{m+1,n} \partial_x^m\partial_y^{n-1} \big(
\partial_x g - |\partial_y\eta|^2 \partial_x f + \langle \partial_x\eta,\partial_y\eta\rangle \partial_y f
\big) dx
\end{align*}
for some constant $C$ independent of $\eta$.
Note that by convention\footnote{See the footnote to Proposition \ref{gradientinequalityprop}.}
the integral over the boundary vanishes if $n=0$ and, furthermore, the first term of the integral
(corresponding to the factor $\partial_x g$) also vanishes if $n=1$ (since $g|_{\partial M} =0$ by assumption).
Therefore, using Proposition \ref{boundaryestimate},
we can now estimate the above expression from below by (using various constants, all of which we denote by $C$)
\begin{align*}
\langle \nabla f_{m,n}, \nabla g_{m,n} \rangle_{L^2} &\ge
K_\eta \| \nabla f_{m,n} \|_{L^2}^2
-
\| \nabla f_{m,n} \|_{L^2} \Big( \| \nabla g_{m+1,n-1} \|_{L^2} + \| \eta \|_{L^{\infty}_t C^1_x}^2 \| \nabla f_{m+1,n-1} \|_{L^2}\Big)
\\
&\qquad\qquad
-
C \| \eta \|_{L^{\infty}_t C^{m+n+1}_x}^2 \| f \|_{\dot{H}^{m+n}} \| \nabla f_{m,n} \|_{L^2}
\\
\end{align*}
and, with the help of Proposition \ref{ggoneprop} and rearranging and combining like terms,
estimate it even further by
\begin{align*}
\langle \nabla f_{m,n}, \nabla g_{m,n} \rangle_{L^2}&\ge K_{\eta} \lVert \grad f_{m,n}\rVert_{L^2}^2 - 2 \lVert \eta\rVert_{L^{\infty}_t C^1_x}^2 \lVert \grad f_{m,n}\rVert_{L^2}
\sum_{k=1}^{n-1} \lVert \grad f_{m+n-k,k}\rVert_{L^2} \\
&\qquad\qquad - C\lVert \eta\rVert_{L^{\infty}_t C^{m+n+1}_x}^2 \lVert \grad f_{m,n}\rVert_{L^2} \lVert f\rVert_{\dot{H}^{m+n}} \\
&\ge \Big(K_{\eta} - (s-1) \epsilon \lVert \eta\rVert_{L^{\infty}_t C^1_x}^2 \Big) \lVert \grad f_{m,n}\rVert_{L^2}^2
- \frac{1}{\epsilon} \lVert \eta\rVert_{L^{\infty}_t C^1_x}^2 \sum_{k=1}^{n-1} \lVert \grad f_{m+n-k,k}\rVert^2_{L^2} \\
&\qquad\qquad - C \lVert  \eta\rVert_{L^{\infty}_t C^{m+n+1}_x}^2 \lVert \grad f_{m,n}\rVert_{L^2} \lVert f\rVert_{\dot{H}^{m+n}}
\end{align*}
for any positive $\epsilon$.

%
%
%

Setting
\begin{align} \label{eq:CCC}
C_{\eta} &= C \| \eta \|_{L^\infty_t C^{s+1}_x}^2
\\
\label{eq:QQQ}
Q_\epsilon &= \frac{1}{\epsilon} \| \eta \|_{L^\infty_t C^1_x}^2
\\  \label{eq:KKK}
K_\epsilon
&=
\inf_{0\leq \tau \leq t} K_\eta - \frac{(s-1)}{2}\epsilon \| \eta \|_{L^\infty_t C^{s+1}_x}^2
\end{align}
and choosing
\begin{equation} \label{eq:eee}
0 < \epsilon
<
\dfrac{\displaystyle \inf_{0\leq \tau \leq t}K_\eta}{\displaystyle (s-1)\|\eta\|_{L^\infty_tC^1_x}^2}
\end{equation}
we therefore obtain
\begin{align} \label{eq:eef}
\langle \nabla f_{m,n}, \nabla g_{m,n} \rangle_{L^2}
\ge
K_\epsilon \| \nabla f_{m,n} \|_{L^2}^2
-
Q_\epsilon \sum_{k=1}^{n-1} \| \nabla f_{m+n-k,k} \|_{L^2}^2
-
C_{\eta} \| \nabla f_{m,n} \|_{L^2} \| f \|_{\dot{H}^{m+n}}
\end{align}
for any integers $m \geq 0$ and $n \geq 0$.

\hskip 0.5cm
Next, let $s \geq 1$. Given positive numbers $B_0, \dots, B_s$ (to be determined below)
consider
\begin{align*}
\llangle f,g \rrangle_{s+1}
&=
\sum_{j=0}^s B_j \langle \partial_x^j \partial_y^{s-j} \grad f, \partial_x^j \partial_y^{s-j} \grad g \rangle_{L^2}
= \sum_{j=0}^s B_j \langle \grad f_{j,s-j}, \grad g_{j,s-j}\rangle_{L^2}
\\
&\ge
\sum_{j=0}^s B_j \left(
K_\epsilon \| \nabla f_{j,s-j} \|_{L^2}^2
-
Q_\epsilon \sum_{k=1}^{s-j-1} \| \nabla f_{s-k,k} \|_{L^2}^2
-
C_\eta \| \nabla f_{j,s-j} \|_{L^2} \| f \|_{\dot{H}^s}
\right)
\\
&=
B_0 K_\epsilon \| \nabla f_{0,s} \|_{L^2}^2
+
B_s K_\epsilon \| \nabla f_{s,0} \|_{L^2}^2
+
\sum_{k=1}^{s-1} \left( K_\epsilon B_k - Q_\epsilon \sum_{j=0}^{k-1} B_j \right) \| \nabla f_{k,s-k} \|_{L^2}^2
\\
&\hskip 6.1cm -
C_{\eta,B}^s \| f \|_{s+1} \| f \|_{\dot{H}^s}
\end{align*}
where $\displaystyle C_{\eta,B}^s = C_{\eta} (s+1)^{1/2} \max_{0\leq j \leq s} \sqrt{B_j}$.
Now, for any $1 \leq k \leq s-1$ pick
\begin{equation} \label{eq:BBB}
B_k = \frac{2}{K_\epsilon} Q_\epsilon \sum_{j=0}^{k-1} B_j
\end{equation}
and set $B_0=B_s =1$.
Note that the solution of the recurrence equation in \eqref{eq:BBB} is easily found to be
$B_k = (1+2Q_\epsilon/K_\epsilon)^{k-1} 2Q_\epsilon/K_\epsilon$ where
$Q_\epsilon$, $K_\epsilon$ and $\epsilon>0$ are given by \eqref{eq:QQQ}, \eqref{eq:KKK} and \eqref{eq:eee}.
Combining these we now obtain
\begin{align*}
\llangle f, g \rrangle_{s+1}
&\ge
\frac{1}{2} K_\epsilon \Big(
\| \nabla f_{0,s} \|_{L^2}^2
+
\sum_{k=1}^{s-1} B_k \| \nabla f_{k,s-k} \|_{L^2}^2
+
\| \nabla f_{s,0} \|_{L^2}^2
\Big)
-
C_{\eta,B}^s \| f \|_{s+1} \| f \|_{\dot{H}^s}
\\
&=
\frac{1}{2} K_\epsilon \| f \|_{s+1}^2
-
C_{\eta,B}^s \| f \|_{s+1} \| f \|_{\dot{H}^s}
\end{align*}
which is the desired estimate.
\end{proof}

\hskip 0.5cm
We can now address the Claim \ref{eq:OMEGA}.
\begin{proposition} \label{omegainvertible}
Let $M=S^1\times [0,\infty)$ and let $\eta(t)$ be a smooth curve of area-preserving diffeomorphisms
$\mathscr{D}_\mu(M)$. Given any $t > 0$
the operator $\widehat{\Omega}_{t} = \int_0^{t} \Lambda_\tau^{-1} \, d\tau$
defined in \eqref{eq:OMEGA} on the space $\mathcal{F}^{s+1}(M)$ of stream functions to itself is invertible.
\end{proposition}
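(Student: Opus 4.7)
The plan is to implement the outline sketched after Claim \eqref{eq:OMEGA}: show that $\widehat{\Omega}_t$ is a semi-Fredholm operator with trivial kernel on $\mathcal{F}^{s+1}(M)$, identify its Fredholm index as zero via a homotopy to small $t$, and conclude invertibility by the open mapping theorem. Triviality of the kernel is immediate from Lemma \ref{lem:L2}: if $\widehat{\Omega}_t f = 0$, then by \eqref{eq:Lambda_inv} the vector field $\Omega_t v_f = v_{\widehat{\Omega}_t f}$ vanishes, so $0 = \langle v_f, \Omega_t v_f\rangle_{L^2} \geq C_t\|v_f\|_{L^2}^2$ forces $v_f \equiv 0$, and then $f\vert_{\partial M}=0$ forces $f\equiv 0$.

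For closed range I would derive a G{\aa}rding-type inequality for $\widehat{\Omega}_t$ by integrating Proposition \ref{positivityprop} pointwise in $\tau$. Substituting $f = \Lambda_\tau^{-1} h$ into the inequality of Proposition \ref{positivityprop} gives, for each $\tau \in [0,t]$,
\[
\llangle \Lambda_\tau^{-1} h, h\rrangle_{s+1} \geq K\|\Lambda_\tau^{-1}h\|^2_{s+1} - C\|\Lambda_\tau^{-1}h\|_{s+1}\|\Lambda_\tau^{-1}h\|_{\dot{H}^s},
\]
and integrating in $\tau$ and using bilinearity of $\llangle\cdot,\cdot\rrangle_{s+1}$ in the first slot produces $\llangle \widehat{\Omega}_t h, h\rrangle_{s+1}$ on the left. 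Since $\eta$ is smooth on $[0,t]$, $\Lambda_\tau$ is bounded on both $H^{s+1}$ and $H^s$ uniformly in $\tau$, which yields a uniform two-sided bound $c\|h\| \leq \|\Lambda_\tau^{-1}h\| \leq C\|h\|$ in both norms (the lower bound coming from $\|h\| \leq \|\Lambda_\tau\|_{\mathrm{op}}\|\Lambda_\tau^{-1}h\|$). Applying the lower bound to the leading positive term, the upper bound to the error term, and Cauchy--Schwarz on the left, one obtains the G{\aa}rding inequality
\[
\|\widehat{\Omega}_t h\|_{s+1} \geq tK'\|h\|_{s+1} - tC'\|h\|_{\dot{H}^s}.
\]
Combined with compactness of the embedding $\mathcal{F}^{s+1} \hookrightarrow \mathcal{F}^{s}$ (valid on $M = S^1\times[0,\infty)$ since elements of $\mathcal{F}^{s+1}$ vanish rapidly at infinity, reducing matters to bounded subdomains where Rellich applies) and the trivial kernel, this makes $\widehat{\Omega}_t$ semi-Fredholm on $\mathcal{F}^{s+1}$.

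For the index I would write $\widehat{\Omega}_t - tI = \int_0^t(\Lambda_\tau^{-1} - I)\,d\tau$ and note that $\Lambda_0 = \Delta_0^{-1}\Delta = I$ on $\mathcal{F}^{s+1}$ (since $\eta(0) = e$), so operator-norm continuity of $\tau \mapsto \Lambda_\tau^{-1}$ at $\tau = 0$ yields $\|\widehat{\Omega}_t - tI\|_{\mathrm{op}} = o(t)$ as $t \to 0^+$. A Neumann series argument then makes $\widehat{\Omega}_t$ invertible (Fredholm of index zero) for all sufficiently small $t > 0$. Since $\{\widehat{\Omega}_t\}_{t > 0}$ is continuous in operator norm and semi-Fredholm throughout by the previous step, the Fredholm index is locally constant on this family (Kato \cite{Kato66}) and therefore zero for every $t > 0$; combined with trivial kernel this forces trivial cokernel, making $\widehat{\Omega}_t$ a continuous bijection on $\mathcal{F}^{s+1}(M)$, and thus invertible by the open mapping theorem. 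The main obstacle is the derivation of the G{\aa}rding estimate for $\widehat{\Omega}_t$ itself rather than merely for each $\Lambda_\tau$: integrating the pointwise inequality naively yields only an upper bound on $\widehat{\Omega}_t$, and extracting the genuine lower bound requires exploiting the uniform two-sided operator bound on $\Lambda_\tau$ so that the positive leading term $K\int_0^t\|\Lambda_\tau^{-1}h\|^2_{s+1}\,d\tau$ is controlled below by $Kc^2 t\|h\|^2_{s+1}$ rather than only by $\|\widehat{\Omega}_t h\|^2_{s+1}/t$ via Jensen's inequality.
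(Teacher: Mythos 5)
Your proposal is correct and follows essentially the same route as the paper: apply Proposition \ref{positivityprop} to $f=\Lambda_\tau^{-1}g$, integrate in $\tau$ using uniform two-sided bounds on $\Lambda_\tau$ to get the G{\aa}rding-type lower bound for $\widehat{\Omega}_t$, combine with the trivial kernel from Lemma \ref{lem:L2} to conclude semi-Fredholmness, and deduce index zero by continuity of the index from the small-$t$ regime. Your Neumann-series treatment of the $t\to 0^+$ limit and the explicit appeal to the compact embedding $\mathcal{F}^{s+1}\hookrightarrow\mathcal{F}^s$ are welcome refinements of steps the paper leaves implicit, but they do not change the argument.
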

\begin{proof}
For any $0 \leq \tau \leq t$ applying Proposition \ref{positivityprop} to $f= \Lambda_t^{-1} g$ we obtain
\begin{align} \label{eq:ooo}
\llangle g, \Lambda_\tau^{-1} g \rrangle_{s+1}
&\ge
K \lVert \Lambda_\tau^{-1} g \rVert^2_{s+1}
-
C \lVert \Lambda_\tau^{-1} g \rVert_{s+1} \lVert \Lambda_\tau^{-1} g \rVert_{\dot{H}^s}
\\ \nonumber
&\ge
K N_1^{-2} \lVert g \rVert^2_{s+1}
-
C N_2^{-1} N_3^{-1} \lVert g \rVert_{s+1} \lVert g \rVert_{\dot{H}^s},
\end{align}
where
\begin{align*}
N_1 = \sup\limits_{0\leq \tau \leq t} \lVert \Lambda_t \rVert_{s+1},
\quad
N_2 = \inf\limits_{0\leq \tau \leq t} \lVert \Lambda_t \rVert_{s+1}
\quad \text{and} \quad
N_3 = \inf\limits_{0 \leq \tau \leq t} \lVert \Lambda_t \rVert_{\dot{H}^s}.
\end{align*}
Integrating both sides of \eqref{eq:ooo} over $[0, t]$ and using Cauchy-Schwarz we get
$$
\lVert \widehat{\Omega}_t g \rVert_{s+1}
\ge
K t N_1^{-2} \lVert g \rVert_{s+1} - C t N_2^{-1} N_3^{-1} \lVert g \rVert_{\dot{H}^s}.
$$

\hskip 0.5cm
It follows that $\widehat{\Omega}_t$ has closed range.
By Lemma 1, $\widehat{\Omega}_t$ has trivial null-space and
it follows that $\widehat{\Omega}_t$ is semi-Fredholm.
Since the index of semi-Fredholm operators is constant under continuous perturbations,
and since it is zero at $t=0$, we conclude that the index is always zero.
Therefore, $\widehat{\Omega}_t$ also has trivial cokernel and must be invertible on the space of $\mathcal{F}^{s+1}$.
\end{proof}

\hskip 0.5cm
It now follows that given any smooth divergence free vector field $v_0$ on $M$ the corresponding operator
$\Omega_t$ on $T_e\mathscr{D}_\mu$ is also invertible which, in light of Proposition \ref{prop:OLD},
implies that $\Phi_t = D\eta_t (\Omega_t - \Gamma_t)$ is the sum of an invertible operator and a compact operator. We conclude that $\Phi_{t}$ is a Fredholm operator of index zero.
This concludes the proof of Theorem \ref{thm:Fred-b} in the smooth case $v_0 \in T_e\mathscr{D}_\mu$.
The $H^s$ case follows by a perturbation argument as in \cite{EMP06} or \cite{MP10}
and will be omitted. The only important thing to note is that our leading-term estimates depend only on
the $C^1$ norm $\lVert \eta\rVert_{C^1}$, and thus when we approximate an $\eta\in H^s$ by an $\tilde{\eta}\in C^{\infty}$,
the coefficients in the leading term can be made as close as we want to those we found above.


\end{document}